\pgfplotsset{compat=newest}
\tikzset{cross/.style={cross out, draw=black, minimum size=2*(#1-\pgflinewidth), inner sep=0pt, outer sep=0pt},
	cross/.default={1pt}}
\DeclareMathOperator*{\esssup}{ess\,sup}
\newtheorem{thm}{Theorem}[section]
\newtheorem{lem}[thm]{Lemma}
\newtheorem{prop}[thm]{Proposition}
\newtheorem{prob}[thm]{Problem}
\theoremstyle{definition}
\newtheorem{defn}[thm]{Definition}
\theoremstyle{remark}
\newtheorem{rem}[thm]{Remark}
\numberwithin{equation}{section}
\begin{document}
	
\title[Finite dimensional backstepping controller design]{Finite dimensional backstepping controller design}%

\author{Varga K. Kalantarov$^{\MakeLowercase{a}}$, Türker Özsarı$^{\MakeLowercase{b},*}$ and Kemal Cem Yılmaz$^{\MakeLowercase{c}}$}
\address{$^a$Department of Mathematics, Koç University\\ Sarıyer, İstanbul, 34450 Turkey}
\address{$^b$Department of Mathematics, Bilkent University\\ Çankaya, Ankara, 06800 Turkey}
\address{$^c$Department of Mathematics, İzmir Institute of Technology\\ Urla, Izmir, 35430 Turkey}
\thanks{This research was funded by TÜBİTAK 1001 Grant \#122F084.}
\thanks{T. Özsarı's research was partially supported by Science Academy's Young Scientist Award (BAGEP 2020)}
\thanks{*Corresponding author: Türker Özsarı, turker.ozsari@bilkent.edu.tr}
\keywords{backstepping, boundary feedback, reaction-diffusion equation, stabilization.}
\subjclass[2020]{35B40, 35K57, 93C20, 93D15, 93D20, 93D23}

\begin{abstract}
We introduce a finite dimensional version of backstepping controller design for stabilizing solutions of PDEs from boundary. Our controller uses only a finite number of Fourier modes of the state of solution, as opposed to the classical backstepping controller which uses all (infinitely many) modes.   We apply our method to the reaction-diffusion equation, which serves only as a canonical example but the method is applicable also to other PDEs whose solutions can be decomposed into a slow finite-dimensional part and a fast tail, where the former dominates the evolution in large time. One of the main goals is to estimate the sufficient number of modes needed to stabilize the plant at a prescribed rate.  In addition, we find the minimal number of modes that guarantee the stabilization at a certain (unprescribed) decay rate. Theoretical findings are supported with numerical solutions.
\end{abstract}
\maketitle
\section{Introduction}
\label{sec:introduction}

The goal of this paper is to introduce a finite-dimensional analogue of the classical backstepping control algorithm for stabilizing solutions of partial differential equations (PDEs) from boundary.  As a canonical example, we will use the reaction diffusion equation posed on an interval with Dirichlet boundary conditions. However, the method is applicable also to other evolutionary equations whose solutions decompose into a slow finite-dimensional part and a fast tail, where the former dominates the dynamics in long time. To this end, let us consider the following plant:
\begin{eqnarray} \label{pde_nl}
	\begin{cases}
		u_t - \nu u_{xx} - \alpha u + \kappa u^3 = 0, \, x\in(0,L), t > 0, \\
		u|_{x=0} = 0, \quad u|_{x=L} = g, \\
		u|_{t=0} = u_0.
	\end{cases}
\end{eqnarray}
Here, $\nu, \alpha > 0$ are given constant coefficients and $g$ is a soughtafter feedback controller that involves only finitely many Fourier sine modes of $u$. We take either $\kappa = 1$ or $\kappa = -1$, where the former one may lead to a uniformly bounded nontrivial stationary solution in-time whereas the latter one may lead the solutions to blow-up in a finite time, if $g = 0$. More precisely, in the absence of control and for certain values of $\nu$, $\alpha$ and $L$ together with $\kappa = 1$, zero equilibrium solution may be either asymptotically stable or unstable: If $\nu\lambda_1 - \alpha > 0$, where $\lambda_1$ is the first eigenvalue of the operator $-\frac{d^2}{dx^2}$ subject to Dirichlet boundary conditions, \eqref{pde_nl} has a unique equilibrium solution $u \equiv 0$. For this case, it is asymptotically stable. Conversely, if $\nu\lambda_1 - \alpha < 0$, then there will be at least two nontrivial stationary solutions, exactly two of which are asymptotically stable, and all solutions bifurcate from the zero equilibrium. This leads to the fact that $u \equiv 0$ is no longer stable. Regarding the latter case where $\kappa = -1$, for some initial data, there exists a finite time $t_0$ such that the energy functional $E(t) = \int_0^L \left(\nu u_x^2 - u^4/2 - \alpha u^2\right)(x,t) dx \to \infty$ as $t \to t_0^-$ (e.g., see \cite{chafee,henry,sattinger} and \cite{zheng2004nonlinear} for a more detailed discussion on these topics). Thus, for each case, our aim is to construct a feedback law of the form \begin{equation}
	\label{feedbackcontroller}
	u(L,t) = \int_0^L \xi(y) \Gamma[P_N u](y,t)dy,
\end{equation}
such that all solutions are steered asymptotically to zero. Here, $\Gamma$ is a linear bounded operator on a certain $L^2-$based functional space, $\xi$ is a suitable smooth function to be constructed, and $P_N$ is the projection operator $P_N\varphi(x) = \sum_{j=1}^N e_j(x) \left(e_j(\cdot), \varphi(\cdot)\right)_2, \quad e_j(x) = \sqrt{\frac{2}{L}} \sin \left(\frac{j \pi x}{L}\right)$. We will refer to boundary feedbacks of above form as a finite-dimensional backstepping controller. The feedback controller $g$ in \eqref{pde_nl} will be defined to be the integral at the right hand side of \eqref{feedbackcontroller} and we explicitly denote this by $g(P_Nu(t))$.

\subsection{Preliminaries}\label{prelim}
For clarity of the exposition, here we provide some notation and inequalities used throughout.  
$L^p(0,L)$, $1 \leq p \leq \infty$ is the usual Lebesgue space and given $\varphi \in L^p(0,L)$ we denote its $L^p-$norm by $\|\varphi\|_{L^p(0,L)}$. For $p = 2$, we write $\|\varphi\|$ instead of $\|\varphi\|_{L^2(0,L)}$. Given $\ell > 0$, we represent the $L^2-$based Sobolev space by $H^\ell(0,L)$. $H_0^1(0,L)$ is the closed subspace of $H^1(0,L)$ that involves those functions in $H^1(0,L)$ which vanish (have zero trace) on the boundary of $(0,L)$. If $A:H^\ell(0,L) \to H^\ell(0,L)$ is a linear bounded operator, $\ell = 0,1$, we denote its operator norm as $\|A\|_{H^\ell(0,L) \to H^\ell(0,L)}$. We write $a \lesssim b$ to denote an inequality $a \leq c b$, where $c > 0$ may depend on only some fixed parameters. With $\lambda_{j}$, $j = 1, 2, \dotsc$, where $\lambda_j = j^2 \lambda_1=\left(\frac{j\pi }{L}\right)^2$, we denote the $j-$th eigenvalue of the operator $-\frac{d^2}{dx^2}$ subject to Dirichlet boundary conditions, and $e_j=\sqrt{\frac{2}{L}}\sin(\frac{j\pi x}{L})$ is the corresponding $L^2-$normalized eigenfunction. We introduce the (solution) spaces $$X_T^\ell\equiv L^\infty(0,T;H^\ell(0,L)) \cap L^2(0,T;H^{1+\ell}(0,L)), \ell=0,1.$$  
We equip these spaces with the norms
\begin{equation*}
	\|\psi\|_{X_T^\ell}^2
	\equiv\esssup_{t\in [0,T]}\|\psi(\cdot,t)\|_{H^{\ell}(0,L)}^2+\int_0^T\|\psi(\cdot,t)\|_{H^{1+\ell}(0,L)}^2dt.
\end{equation*}
The following inequalities are useful while carrying out some of the norm estimates.
\begin{itemize}
	\item{Cauchy's inequality with} $\epsilon$: $ab \leq \frac{\epsilon a^2}{2} + \frac{b^2}{2\epsilon}$, $\epsilon>0$.
	\item {Cauchy-Schwarz inequality:} For $\varphi,\psi \in L^2(0,L)$, $\left| \left(\varphi,\psi\right)_2 \right| \leq \|\varphi\| \|\psi\|.$
	\item{Poincaré inequality:} For $\varphi \in H_0^1(0,L)$ or for $\varphi\in H^1(0,L)$ with $\int_{0}^L\varphi(x) dx=0$, it follows $\|\varphi\|^2 \leq \lambda_1^{-1} \|\varphi^\prime\|^2.$ In particular, $\|\varphi^\prime\|_{2}^2\le \lambda_1^{-1}\|\varphi''\|_2^2$ for $\varphi\in H_0^1(0,L)\cap H^2(0,L)$. 
	\item{Poincaré type inequality:} For $u \in H_0^1(0,L)$, 
	\begin{equation*}
		\|u - P_N u\|^2 \leq \lambda_{N+1}^{-1} \|u^\prime\|^2, \quad \lambda_{N+1} = (N+1)^2 \lambda_1.
	\end{equation*}
	The proof of this inequality is easy, indeed with $\hat{u}_j=(u,e_j)_2$, one has
	\begin{equation*}
		\|u^\prime\|^2=\sum\limits_{j=1}^\infty\lambda_j\hat{u}_j^2\ge \sum\limits_{j=N+1}^\infty\lambda_j\hat{u}_j^2 \ge \lambda_{N+1}\sum\limits_{j=N+1}^\infty\hat{u}_j^2= \lambda_{N+1}\|u-P_Nu\|^2.
	\end{equation*}
	\item{Gagliardo-Nirenberg's inequality:} Let $u \in H_0^1(0,L)$, $p \geq 2$, $\alpha = \frac{1}{2} - \frac{1}{p}$. Then, $\|u\|_{L^p(0,L)} \leq c^* \|u^{\prime}\|^\alpha \|u\|^{1-\alpha},$ where $c^* > 0$  is the best GN-constant that depends on $p, \alpha$, and $L$.
\end{itemize}

\subsection{Problem statements}
We are interested in two different stabilization problems.
\begin{prob} \label{prob}
	Let $\nu, \alpha >0$ be such that $\nu\lambda_1 - \alpha < 0$ and $u_0 \in H^\ell(0,L)$, where $\ell=0$ or $\ell=1$. For \textit{given} $\gamma > 0$, can we find $N > 0$ and construct a feedback control law of the form $u(L,t) = g(P_Nu(t))$ such that the solution of \eqref{pde_nl} satisfies $\|u(t)\|_{H^\ell(0,L)} = \mathcal{O}(e^{-\gamma t})$ for $t \ge 0$?
\end{prob}

In contrast with the above \emph{rapid stabilization} problem, if one is only interested in decay at a certain unprescribed rate, then the problem takes the following form:

\begin{prob} \label{prob2}
	Let $\nu, \alpha >0$ be such that $\nu\lambda_1 - \alpha < 0$ and $u_0 \in H^\ell(0,L)$, where $\ell = 0$ or $\ell=1$.  What is the minimum value of $N$ for which the solution of \eqref{pde_nl} satisfies $\|u(t)\|_{H^\ell(0,L)} = \mathcal{O}(e^{-\gamma t}), t \ge 0$ for \textit{some} $\gamma > 0$ with a boundary feedback of the form $u(L,t) = g(P_Nu(t))$?
\end{prob}

\begin{rem}
	In Problem \ref{prob} and  Problem \ref{prob2}, we are not only interested in establishing the existence of a sufficiently large $N$ that fulfills the goal, in addition, we want to provide a precise estimate for $N$ in terms of problem parameters such as the decay rate and the coefficients in the main equation.
\end{rem}
Problem \ref{prob} and Problem \ref{prob2} will be treated by combining ideas from the theory of asymptotic dynamics of dissipative systems and  boundary control of PDEs via backstepping method, e.g., see \cite{krsticbook}. We give an affirmative answer to the question stated in Problem \ref{prob} in Theorems \ref{wpstab_lin} and \ref{wpstab_nonlin}. Regarding Problem \ref{prob2}, we find that $N$ is exactly equal to the instability level of \eqref{pde_nl}. More precisely, if the number of positive eigenvalues of the differential operator $-\nu \frac{d^2}{dx^2} - \alpha I$ with domain $\{\varphi \in H^2(0,L) \, | \, \varphi(0) = \varphi(L) = 0\}$ is $N$, then it suffices to design a boundary controller involving only the first $N$ Fourier sine modes of $u$. Corresponding results are stated in Theorem \ref{wpstab_lin2} and Theorem \ref{wpstab_nonlin2}.

\subsection{Discussion of previous work and motivation} \label{motivation}
Control and stabilization of evolutionary PDEs is an important topic and has become an area of interest for many researchers in recent decades. Several different approaches exist for designing control systems. In some of these systems, control inputs act from the interior of the medium, perhaps only from a local part. Other systems may use boundary controllers, especially when access to medium is restricted. The control mechanism is of feedback type if the controller depends on the state of the model. A backstepping-based boundary feedback controller is a classical example. See \cite{AaSmKr,KrGuSm,Liu03,SmCeKr,SmKr1,SmKr2} for its application to second-order PDEs and \cite{BaOzYi,CeCo,CoLu,OzAr,OzBa,OzYi} for its application to third-order PDEs.

Stabilization of a nonlinear diffusion equation (with a quadratic nonlinearity) via the standard backstepping transform was previously achieved by \cite{Yu14}. Designing a finite-dimensional controller for a similar problem as in the current paper is more challenging. For instance, the usual methods for proving bounded invertibility of the standard backstepping transformation fails in the current finite-dimensional case and one needs to give a new proof of such result. Moreover, proofs of decay rate estimates through multipliers are more delicate in the present case as projections are involved both in equations and in the estimates.  Designing a control system where the controller involves only finitely many parameters of the solution rather than the full state of the solution also offers an important practical use. 

The idea of stabilization of solutions of nonlinear parabolic equations by finite-dimensional controllers goes back to the pioneering works on 2D Navier-Stokes equations (see \cite{BaTi}, \cite{Fur1}).  The results obtained in these works and the pioneering results on infinite-dimensional disspative systems that generate 2D Navier-Stokes equations, nonlinear parabolic equations, nonlinear damped wave equations and related systems of PDEs (e.g., see \cite{BaVi}, \cite{Lad1}, \cite{FTR}) inspired further investigations on stabilization of these equations by internal and boundary finite-dimensional controllers (see \cite{AzTi}, \cite{Che}, \cite{KaTi}, \cite{KaOz}, \cite{LHM}, \cite{Mun1} and references therein). A number of papers are devoted to the boundary stabilization of nonlinear parabolic equations by finite-dimensional controllers (\cite{Bar1}, \cite{Mun1}, \cite{Mun2}). In particular, Barbu \cite{Bar1} studied the problem of stabilization of the stationary solution of the system
\begin{equation*}
	\begin{cases}
		u_t=\Delta u+f(x,u),\ \  \mbox{in} \ \ (0,\infty)\times \Omega,\\
		u=v, \ \ \mbox{on} \ \ (0,\infty)\times \Gamma_1, \ \ \frac{\partial u}{\partial n}=0  \ \ \mbox{on} \ \ (0,\infty)\times \Gamma_2, \\
		u(x,0)=u_0(x)\ \  \mbox{in}
		\ \ \Omega,
	\end{cases}
\end{equation*}
with feedback boundary controller depending on finitely many parameters. Here, $\Omega\subset \mathbb R^d$ is a bounded domain with the boundary $\partial \Omega =\Gamma_1\cup \Gamma_2$, where $\Gamma_1,\Gamma_2$ are disjoint connected components of $\partial \Omega.$ They assume that the system, $\{(\partial e_j/\partial n) \, | \, 1 \leq j \leq N\}$, of unstable Fourier modes is linearly independent on $\Gamma_1$. Their strategy works successfuly for $d \geq 2$. However, in one spatial dimension, this assumption is true only if the number of unstable Fourier modes is one. This situation results in a restriction $\alpha < \lambda_2$ on the coefficient of the zero-order term of the main equation, where $\lambda_2$ is the second eigenvalue of the differential operator $-\frac{d^2}{dx^2}$ with domain $\{\varphi \in H^2(0,L) \, | \, \varphi^\prime(0) = \varphi(L)  = 0\}$. The strategy that we design for Problems \ref{prob} and Problem \ref{prob2} will work for arbitrary choice of $\alpha$, i.e., arbitrary levels of instability.

Recently, stabilization of the 1D linear heat equation by observer-based finite-dimensional controllers using in-domain point measurement and boundary measurement was studied in \cite{KaFr} and \cite{LhPr}, respectively. There are also similar results for the semilinear heat equation, see \cite{Kat23}. Their strategy relies on homogenizing the boundary conditions by changing the variables to transfer the boundary control input into the domain, and then decomposing the main equation with respect to the Fourier modes. Utilizing the fact that the eigenvalues $\lambda_j, j \in \mathbb{Z}^+$, of the differential operator are countable and ordered, and therefore unstable Fourier modes are finitely many, say the first $N_0$, they control only first $N_0$ modes by considering $N_0-$dimensional system of ordinary differential equations (ODEs). Another recent study on the stabilization of linear heat equation in higher dimensions is \cite{Feng2} (see also \cite{Feng1} for 1D case), which is again based on stabilizing the model via spectral truncation stabilizer.

These studies mentioned in the previous paragraph are based on stabilizing the infinite-dimensional dynamical systems by controlling a corresponding finite-dimensional system, which  eventually amounts to use finite-dimensional feedback controllers. In our present study, we will follow a different approach that allows us to stabilize the zero equilibrium by controlling directly the infinite-dimensional system via finite-dimensional boundary feedback controllers. Our approach is inspired by the fact that dissipative dynamical systems are known to possess finite-dimensional determining parameters (e.g., see \cite{BaVi,FoPr,Hale,Rob,Temam}). We combine this theory with the backstepping method by choosing the so-called \emph{target} model in a novel way.  However, the advantage of our strategy is that, it allows us to gain exponential stabilization of zero equilibrium with a desired decay rate, while the instability level of the model is allowed to be arbitrarily large. This in turn amounts to say that the decay rate can be chosen arbitrarily large, also known as rapid stabilization. We would also like to note that the method in our present study is applicable to various kind second-order dissipative nonlinear PDEs.

It is remarkable that recently a nice strategy, called late-lumping, that combines the method of backstepping with a finite-dimensional controller was introduced, see for instance \cite{Aur19} and \cite{Woit17}  for its application to the heat equation.   There are certain important differences when the methodology of these papers are compared with ours although the essence of the method in \cite{Woit17} is in the same spirit with ours. The method of \cite{Woit17} uses projection of the state onto general finite-dimensional subspaces, which are not necessarily spans of associated eigenfunctions as in this paper. The closed-loop system was not explicitly stated and no stability analysis was given in \cite{Woit17}, but still certain stability features of controlled solutions, at least for the case of a modal basis, can be deduced from their work. One of our main motivations in this paper is to transform the closed-loop system into a target model with homogeneous boundary conditions amenable to a Lyapunov analysis that yields a precise information on the sufficient number of Fourier modes for the desired purpose (stabilization with either a prescribed or an unprescribed rate). On the other hand, in aforementioned works, authors use the standard backstepping transformation as in \cite{krsticbook}.  Therefore, it does not involve the projection operator inside the integral term and this contrasts with our transformation (see equation \eqref{bt} below), which is a priori given with the projection $P_N$ in its definition.  A precise or optimal estimate for the sufficient number of modes is not provided in the late-lumping approach.

In addition, the late-lumping approach is based on a crucial assumption that the finite-dimensional controller uniformly converges to the standard controller that uses the full state of the solution.  Such assumption was verified in \cite[Lemma 5]{Aur19}, however its proof relies on the embedding $H^1(0,L)\hookrightarrow L^\infty(0,L)$. This forces solutions and in turn the initial state to be taken from $H^1(0,L)$, too, for proving decay estimates at $L^2$-level. On the other hand, our method does not rely on the verification of such property and we establish stabilization at $L^2$-level with initial datum taken from $L^2(0,L)$.  Furthermore, we also provide decay rate estimates at $H^1-$level with data from $H^1(0,L)$ - the space used for $L^2$-level decay with late-late lumping approach.

\subsection{Methodology}
Let us now describe our strategy. We first consider the following linearized model associated with \eqref{pde_nl}:
\begin{equation} \label{pde_l}
	\begin{cases}
		u_t - \nu u_{xx} - \alpha u = 0, \quad x\in(0,L), t  > 0, \\
		u(0,t) = 0, \quad u(L,t) = g(t), \\
		u(x,0) = u_0(x).
	\end{cases}
\end{equation}
The steps of our strategy are given below:\\
\textbf{\textit{Step i.}} We first design the following target model:
\begin{equation} \label{pde_tl}
	\begin{cases}
		w_t - \nu w_{xx} - \alpha w + \mu P_Nw= 0, \quad  x\in(0,L), t > 0, \\
		w(0,t) = w(L,t) = 0,\\
		w(x,0)=w_0(x),
	\end{cases}
\end{equation}
where $w_0$ to be computed in step (iii) below. This target model is chosen to convert the feedback action on the boundary to an interior stabilizing effect (e.g., see \cite{Liu03}). In our case, the feedback involves a finite number of Fourier modes, therefore, we consider a target model with a stabilizing term given by $\mu P_N w$. Indeed, dissipative dynamical systems possess finite-dimensional asymptotic dynamics (e.g., see \cite{Hale,Rob,Temam}) since such systems possess a finite number of determining modes \cite{FoPr}. Therefore, the term $\mu P_Nw$ is capable of preventing large fluctuations or uncontrolled growth of the solution for large $N$ due to the term  $-\alpha w$. We will show that solutions of \eqref{pde_tl} can be steered to zero with any prescribed exponential decay rate provided $\mu > \nu\lambda_1 - \alpha$ and $N$ fulfills a certain criterion. \\
\textbf{\textit{Step ii.}} Next, we introduce our backstepping transformation:
\begin{equation} \label{bt}
	\begin{split}
	u(x,t) &= w(x,t) + \int_0^x k(x,y) P_N w(y,t) dy \\
	&\doteq [(I + \Upsilon_k P_N)w](x,t)\doteq[T_Nw](x,t).
	\end{split}
\end{equation}
In Section \ref{sec_bstrans}, we show that if $k$ is a sufficiently smooth solution of a certain boundary value problem (see \eqref{ker_pde}) posed on the triangular region $\Delta_{x,y} \doteq \left\{(x,y) \in \mathbb{R}^2 : x\in(0,L), y \in (0,x)\right\}$, then \eqref{bt} successfully maps the target model \eqref{pde_tl} to  the linearized model \eqref{pde_l}. Finding the kernel $k$ allows us to write the control input as
\begin{equation} \label{ctrl_input}
	g(t) = \int_0^L k(L,y) P_N w(y,t) dy.
\end{equation}
\eqref{ctrl_input} will be rewritten in the form of a feedback using the inverse operator discussed in the next step.\\
\textbf{\textit{Step iii.}} We prove that the transformation $I + \Upsilon_kP_N$ is bounded invertible if $(\mu,N)$ is an admissible decay rate-mode pair (see Definition \ref{ratemodepair} in Section \ref{sec_bstrans}). Bounded invertibility and step (ii) imply that the decay properties of the target model \eqref{pde_tl} are inherited by the linear plant \eqref{pde_l}. Moreover, the inverse  operator can be expressed as $I - \Phi_N$, where $\Phi_N : L^2(0,L) \to H^\ell(0,L)$ is a bounded linear operator. Thus, the boundary controller \eqref{ctrl_input} can be expressed as
\begin{equation}\label{controller}
	g(t) = \int_0^L k(L,y) \Gamma[P_N u](y,t)dy,
\end{equation}
where $\Gamma \doteq (I - P_N \Phi_N): H^\ell(0,L) \to H^\ell(0,L)$. The initial state of the target model takes the form $w_0 \doteq (I - \Phi_N) u_0$.

We use the same backstepping transformation to transform  the nonlinear model \eqref{pde_nl} into the corresponding nonlinear target model below.
\begin{equation*}
	\begin{cases}
		w_t - \nu w_{xx} - \alpha w + \mu P_Nw + \kappa Fw = 0, \quad x\in(0,L), t > 0, \\
		w(0,t) = w(L,t) = 0, \\
		w(x,0) = w_0(x),
	\end{cases}
\end{equation*}
where $Fw = (I - \Phi_N) \left[((I + \Upsilon_kP_N)w)^3\right].$ We will see in Section \ref{sec_wpstab_nonlin} that choosing $\mu > \nu \lambda_1 - \alpha$ and $N$ sufficiently large, solutions of the above nonlinear target model tend to zero at a prescribed exponential decay rate, provided that $\|w_0\|_{H^\ell(0,L)}$ is sufficiently small. Now, thanks to steps (ii)-(iii) above, under a smallness assumption on $u_0$, the same decay result will also hold for the nonlinear plant \eqref{pde_nl}.

\subsection{Main results} We first solve the rapid stabilization problem for the linearized model and prove the theorem below. In our main results, $(\mu,N)$ is always assumed to be an admissible decay rate-mode pair (see Definition \ref{ratemodepair} in Section \ref{sec_bstrans} for details).
\begin{thm}\label{wpstab_lin}
	Let $N\ge 1$, $\nu, \alpha > 0$, $\mu>\alpha - \nu \lambda_1 \geq 0$, $u_0 \in H^\ell(0,L)$, where $\ell = 0$ or $\ell=1$ and $(\mu,N)$ be an admissible decay rate-mode pair. Then, the weak solution $u$ of the linear problem \eqref{pde_l}, which satisfies the feedback control law $u(L,t) = g(P_N u(t))$, belongs to $X_T^\ell$. Moreover, if
	\begin{equation*}
		N > \max \left\{ \frac{\mu}{2\nu\lambda_1} - 1 , \frac{\mu}{\mu + \nu\lambda_1 - \alpha} - 1\right\},
	\end{equation*}
	then $u$ satisfies the decay estimate
	\begin{equation*}
		\|u(t)\|_{H^\ell(0,L)} \leq c_k e^{-\gamma t} \|u_0\|_{H^\ell(0,L)} \quad \text{for }t \geq 0 \text{ a.e.}, 
	\end{equation*}
	where $\gamma = \nu\lambda_1 - \alpha + \mu \left(1 - \frac{1}{N+1}\right)$ and $c_k$ is a nonnegative constant dependent on the kernel $k$ and independent of $u_0$.
\end{thm}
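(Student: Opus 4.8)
The plan is to reduce the entire statement to an analysis of the target model \eqref{pde_tl}, exploiting that, by Steps (ii)--(iii) of the methodology, the backstepping map $T_N = I+\Upsilon_kP_N$ carries solutions of \eqref{pde_tl} to solutions of the linear plant \eqref{pde_l} satisfying the feedback law $u(L,t)=g(P_Nu(t))$, and that $T_N$ is boundedly invertible on $H^\ell(0,L)$ with inverse $I-\Phi_N$ whenever $(\mu,N)$ is an admissible pair (Definition \ref{ratemodepair}). Granting these, existence of $u\in X_T^\ell$ follows from well-posedness of \eqref{pde_tl}, and the decay estimate for $u$ follows from the decay estimate for $w$ together with the two-sided bounds $\|u(t)\|_{H^\ell(0,L)}\le \|T_N\|\,\|w(t)\|_{H^\ell(0,L)}$ and $\|w_0\|_{H^\ell(0,L)}=\|(I-\Phi_N)u_0\|_{H^\ell(0,L)}\le\|I-\Phi_N\|\,\|u_0\|_{H^\ell(0,L)}$. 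Setting $c_k\doteq\|T_N\|_{H^\ell(0,L)\to H^\ell(0,L)}\,\|I-\Phi_N\|_{H^\ell(0,L)\to H^\ell(0,L)}$ then yields a constant depending only on the kernel $k$, not on $u_0$. So the whole theorem rests on two facts about the target model: it is well posed in $X_T^\ell$, and its solutions decay like $e^{-\gamma t}$ in $H^\ell(0,L)$.

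Well-posedness of \eqref{pde_tl} is routine: it is the heat equation perturbed by the bounded finite-rank operator $\mu P_N$, posed with homogeneous Dirichlet data, so a standard Galerkin (or semigroup) argument produces a unique weak solution $w\in X_T^\ell$ for every $w_0\in H^\ell(0,L)$, the a priori bound being exactly the energy identity used below. Since the kernel $k$ is smooth on $\Delta_{x,y}$, the map $T_N$ is bounded on $H^\ell(0,L)$ and on $H^{1+\ell}(0,L)$, hence maps $X_T^\ell$ into itself, giving $u\in X_T^\ell$.

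The heart of the argument is the decay estimate for \eqref{pde_tl}, which I would obtain by an energy/multiplier method, testing with $w$ when $\ell=0$ and with $-w_{xx}$ when $\ell=1$. For $\ell=0$, using that $P_N$ is an orthogonal projection so that $(P_Nw,w)=\|P_Nw\|^2$, this gives
\begin{equation*}
	\tfrac12\tfrac{d}{dt}\|w\|^2+\nu\|w_x\|^2-\alpha\|w\|^2+\mu\|P_Nw\|^2=0 .
\end{equation*}
The decisive step is to split the dissipation across the controlled and uncontrolled modes: a Fourier computation yields $\|w_x\|^2\ge \lambda_1\|P_Nw\|^2+\lambda_{N+1}\|(I-P_N)w\|^2$, whence
\begin{equation*}
	\tfrac12\tfrac{d}{dt}\|w\|^2+(\nu\lambda_1+\mu-\alpha)\|P_Nw\|^2+(\nu\lambda_{N+1}-\alpha)\|(I-P_N)w\|^2\le 0 .
\end{equation*}
Thus $\tfrac{d}{dt}\|w\|^2+2\gamma\|w\|^2\le 0$ for every $\gamma\le\min\{\nu\lambda_1+\mu-\alpha,\ \nu\lambda_{N+1}-\alpha\}$, and Grönwall gives $\|w(t)\|\le e^{-\gamma t}\|w_0\|$. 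The two conditions on $N$ enter precisely to certify that the clean rate $\gamma=\nu\lambda_1-\alpha+\mu(1-\tfrac1{N+1})$ is such an admissible value: $N>\tfrac{\mu}{2\nu\lambda_1}-1$ forces $\nu\lambda_1(N+1)(N+2)>\mu$, i.e. $\gamma\le \nu\lambda_{N+1}-\alpha$ (the tail does not decay slower than $\gamma$), while $N>\tfrac{\mu}{\mu+\nu\lambda_1-\alpha}-1$ is exactly the requirement $\gamma>0$. The case $\ell=1$ is entirely analogous: testing with $-w_{xx}$ gives the same structure with $\|(P_Nw)_x\|^2$, $\|((I-P_N)w)_x\|^2$ in place of $\|P_Nw\|^2$, $\|(I-P_N)w\|^2$, and the bound $\|w_{xx}\|^2\ge\lambda_1\|(P_Nw)_x\|^2+\lambda_{N+1}\|((I-P_N)w)_x\|^2$ delivers the identical $\gamma$.

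I expect the decay computation, rather than well-posedness or the transfer step, to be the main obstacle. Because the stabilizing term carries the projection $P_N$, the multiplier manipulations must cleanly separate the controlled modes (enjoying the extra damping $\mu$) from the tail (damped only by diffusion through $\lambda_{N+1}$), and the bookkeeping must keep all coefficients positive under the standing hypothesis $\mu>\alpha-\nu\lambda_1\ge 0$. Some care is also needed to retain a positive multiple of the dissipation $\|w_x\|^2$ (resp. $\|w_{xx}\|^2$), so that the $L^2(0,T;H^{1+\ell})$ portion of the $X_T^\ell$ norm is controlled together with the exponential decay. Once the differential inequality is established, the remainder is the transformation bookkeeping described in the first paragraph.
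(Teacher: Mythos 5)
Your proposal is correct, and the transfer step (well-posedness of the target model, boundedness of $T_N$ and of $T_N^{-1}=I-\Phi_N$ for an admissible pair $(\mu,N)$, and $c_k=\|T_N\|\,\|T_N^{-1}\|$) matches the paper exactly. Where you genuinely diverge is in the core decay estimate for \eqref{pde_tl}. The paper writes $2\mu\int_0^L wP_Nw\,dx=2\mu\|w\|^2-2\mu\int_0^L w(w-P_Nw)\,dx$, applies Young's inequality with a free parameter $\epsilon_1$, absorbs the tail via the Poincar\'e-type bound $\|w-P_Nw\|^2\le\lambda_{N+1}^{-1}\|w_x\|^2$, applies the ordinary Poincar\'e inequality to the leftover dissipation, and finally optimizes $\epsilon_1=\tfrac1{N+1}$ (resp.\ $\epsilon_2=\tfrac1{\lambda_1(N+1)}$ at the $H^1$ level); the rate $\gamma=\nu\lambda_1-\alpha+\mu(1-\tfrac1{N+1})$ and both conditions on $N$ fall out of that optimization. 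You instead use the exact identities $(P_Nw,w)=\|P_Nw\|^2$ and $\|w_x\|^2\ge\lambda_1\|P_Nw\|^2+\lambda_{N+1}\|(I-P_N)w\|^2$, with no Young's inequality and no free parameter, obtaining the rate $\min\{\nu\lambda_1+\mu-\alpha,\ \nu\lambda_{N+1}-\alpha\}$ and then checking that the theorem's hypotheses on $N$ make $\gamma$ an admissible value ($2\nu\lambda_1(N+1)>\mu$ indeed implies $\nu\lambda_1(N+1)(N+2)>\mu$, i.e.\ $\gamma\le\nu\lambda_{N+1}-\alpha$, and the second condition is exactly $\gamma>0$). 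Your route is cleaner and in fact sharper for the linear problem — your splitting shows the stated $N$-condition is not tight — whereas the paper's parametrized Young's-inequality version is the one that is recycled verbatim in the nonlinear estimates of Section \ref{sec_wpstab_nonlin} and re-optimized with a different choice of $\epsilon_1$ in Section \ref{sec_wpstab2} to extract the minimal-mode condition \eqref{mucond}, so the extra slack there is doing work elsewhere in the paper. Two small points to tidy: as you note yourself, you must reserve a positive fraction of $\|w_x\|^2$ (or integrate the raw energy identity separately) to recover the $L^2(0,T;H^{1+\ell})$ half of the $X_T^\ell$ norm, since your splitting spends all of the dissipation; and at the $H^1$ level the identity $-\int_0^L w_{xx}P_Nw\,dx=\|(P_Nw)_x\|^2$ should be justified by integration by parts together with the vanishing of $P_Nw$ at the endpoints.
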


Secondly, we treat the corresponding nonlinear model.
\begin{thm}\label{wpstab_nonlin}
	Let $\nu, \alpha > 0$, $\kappa\in\{-1,1\}$, $\mu>\alpha - \nu \lambda_1 \geq 0$, $d\in(0,1)$, $u_0 \in H^\ell(0,L)$, where $\ell = 0$ or $\ell=1$ and $(\mu,N)$ be an admissible decay rate-mode pair. Let $\epsilon>0$ be any number satisfying $\epsilon\lambda_1\le \max\{\nu-\frac{\mu}{N+1}, 2\gamma\}$ and assume that $\|u_0\|_{H^\ell}$ is sufficiently small in the sense that $\|u_0^{(\ell)}\|_2^2\le d\frac{\sqrt{\epsilon(2\gamma-\epsilon\lambda_1)}\lambda_1^{\ell}}{c_0c_{1}}{\|T_N^{-1}\|_{H^\ell(0,L)\rightarrow H^\ell(0,L)}^{-2}}$, where $\gamma$ is given by \eqref{gamma}, $c_0, c_1$ are defined in \eqref{c0def} and \eqref{c1def}, respectively. If
	\begin{equation*}
		N > \max \left\{\frac{\mu}{2\nu\lambda_1} - 1, \frac{\mu}{\mu + \nu\lambda_1 - \alpha} - 1\right\},
	\end{equation*}
	then the nonlinear problem \eqref{pde_nl} has a global solution $u\in X_T^\ell$, which satisfies the feedback control law $u(L,t) = g(P_N u(t))$, and has the decay estimate
	\begin{equation*}
		\|u(t)\|_{H^\ell(0,L)} \leq c_k e^{-\gamma't} \|u_0\|_{H^\ell(0,L)} \quad \text{for } t \geq 0 \text{ a.e.},
	\end{equation*}
	for any $\gamma'$ with $0<\gamma'<\gamma= \nu\lambda_1 - \alpha + \mu \left(1 - \frac{1}{N+1}\right)$, where $c_k$ is a non-negative constant dependent on the kernel $k$, $d$ and independent of $u_0$.
\end{thm}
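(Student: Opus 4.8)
The plan is to push the problem through the backstepping transformation \eqref{bt} to the nonlinear target model, prove exponential decay there by a perturbed energy estimate, and then transport the estimate back to the plant using the bounded invertibility of $T_N$. Since $(\mu,N)$ is admissible, both $T_N=I+\Upsilon_kP_N$ and $T_N^{-1}=I-\Phi_N$ are bounded on $H^\ell(0,L)$. First I would set $w_0=T_N^{-1}u_0$ and read \eqref{pde_nl} as the target model with $f(w)=T_N^{-1}\big[(T_Nw)^3\big]$. As $f$ is a cubic composed with bounded operators, it is locally Lipschitz from $H^\ell$ to $H^\ell$ on bounded sets, so a standard fixed-point/Galerkin argument produces a unique local solution $w\in X_T^\ell$; then $u=T_Nw$ solves \eqref{pde_nl} and, by Step (iii) of the methodology, obeys $u(L,t)=g(P_Nu(t))$. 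Global existence will come a posteriori from the decay bound.

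The core is the energy estimate. For $\ell=0$ I would pair the target equation with $w$, use that $P_N$ is an orthogonal projection so $(P_Nw,w)=\|P_Nw\|^2$, and obtain
\[ \tfrac12\tfrac{d}{dt}\|w\|^2+\big(\nu\|w_x\|^2-\alpha\|w\|^2+\mu\|P_Nw\|^2\big)=-(f(w),w). \]
The algebraic heart, identical to the mechanism behind Theorem \ref{wpstab_lin}, is that writing $\mu\|P_Nw\|^2=\mu\|w\|^2-\mu\|w-P_Nw\|^2$, controlling the subtracted term via $\|w-P_Nw\|\le\lambda_{N+1}^{-1/2}\|w_x\|$ together with a Poincaré step, yields for a free parameter $\epsilon>0$ the coercivity
\[ \nu\|w_x\|^2-\alpha\|w\|^2+\mu\|P_Nw\|^2\ \ge\ \big(\gamma-\tfrac{\epsilon\lambda_1}{2}\big)\|w\|^2+\tfrac{\epsilon}{2}\|w_x\|^2, \]
with $\gamma=\nu\lambda_1-\alpha+\mu(1-\tfrac1{N+1})$. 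The two hypotheses on $N$ are exactly what this needs: the first keeps the coefficient of $\|w_x\|^2$ (hence the reservoir $\tfrac\epsilon2\|w_x\|^2$) nonnegative, and the second is equivalent to $\gamma>0$. The retained gradient term is the reservoir earmarked for the nonlinearity.

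Next I would absorb the cubic term: bound $|(f(w),w)|\le\|T_N^{-1}\|\,\|(T_Nw)^3\|\,\|w\|$, estimate $\|(T_Nw)^3\|\le\|T_Nw\|_{L^\infty}^2\|T_Nw\|$ by an Agmon/Gagliardo--Nirenberg inequality (no zero-trace assumption needed at this step) together with boundedness of $T_N$ and Poincaré on $w\in H_0^1$, producing a bound of the form $c_0c_1\|T_N^{-1}\|\,\|w_x\|\,\|w\|^3$. Young's inequality splits this into $\tfrac{\epsilon}{2}\|w_x\|^2$ plus a multiple of $\|w\|^6$; the gradient piece cancels the reservoir and, with $E=\|w\|^2$, one is left with a Bernoulli inequality $E'\le-(2\gamma-\epsilon\lambda_1)E+C\|T_N^{-1}\|^2E^3$. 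A continuity/bootstrap argument then shows that if $E(0)=\|w_0\|^2$ lies below the balance threshold $\sim\sqrt{\epsilon(2\gamma-\epsilon\lambda_1)}/(c_0c_1)$ — which, via $\|w_0\|\le\|T_N^{-1}\|\,\|u_0\|$, is guaranteed by the stated smallness of $\|u_0\|_{H^\ell}$ (the factor $\|T_N^{-1}\|^{-2}$ and $\lambda_1^\ell$ come from this conversion and the level $\ell$) — then $E(t)$ stays small and decays like $e^{-2\gamma't}$ for every $\gamma'<\gamma$, which also yields global existence. The case $\ell=1$ is analogous, pairing with $-w_{xx}$ and using $H^1\hookrightarrow L^\infty$. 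Finally, $\|u(t)\|_{H^\ell}=\|T_Nw(t)\|_{H^\ell}\le\|T_N\|\,\|T_N^{-1}\|\,c\,e^{-\gamma't}\|u_0\|_{H^\ell}$ gives the claim with $c_k=c\,\|T_N\|\,\|T_N^{-1}\|$.

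The main obstacle is the nonlinear step: controlling $(f(w),w)$ when $f$ carries both $T_N^{-1}$ and the cubic $(T_N\,\cdot)^3$, so that the projection and the transformation operators sit inside every norm, and then calibrating the reservoir $\epsilon$, the Young split, and the smallness threshold so that the Bernoulli inequality genuinely self-improves in the continuity argument. The quantity $\sqrt{\epsilon(2\gamma-\epsilon\lambda_1)}$ appearing in the hypothesis is precisely the balance point of this absorption, and verifying that it is compatible with the admissibility of $(\mu,N)$ and with the $H^1$-level estimate is the delicate part.
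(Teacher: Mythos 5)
Your proposal follows essentially the same route as the paper's proof: transform to the nonlinear target model, run the $L^2$/$H^1$ multiplier estimates with the $\mu P_N w$ term controlled by the Poincar\'e-type inequality, absorb the cubic via Gagliardo--Nirenberg and Young's inequality into a Bernoulli-type differential inequality (the paper's Lemma \ref{lem_nonlinh1} plays the role of your continuity/bootstrap step and yields the same smallness threshold $d\sqrt{a/b}$), and transport the decay back through the bounded invertibility of $T_N$. The one caveat is that for $\ell=0$ the cubic is not locally Lipschitz on $L^2$ alone, so the local fixed point must be run in $X_T^\ell$ using the estimate $\|(T_Nz)^3\|\lesssim \|z_x\|\,\|z\|^2$, exactly as the paper's contraction argument does and as your sketch implicitly assumes.
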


\begin{rem}
	The smallness assumption on initial datum is used only for stabilization and it is not a necessary assumption for local existence of solutions.  Smallness condition for stabilization of nonlinear PDEs via backstepping controllers is common, see for instance \cite{CeCo}. This is because the backstepping transformation turns the original nonlinear plant into another plant (target model) in which the monotone structure of the nonlinear term is disrupted.
\end{rem}

Next, we consider the stabilization problem with a certain unprescribed not necessarily large decay rate for the linearized model. We show that if $N$ is the number of unstable modes, it suffices to employ a controller that involves only the first $N$ modes to achieve exponential stabilization.

\begin{thm}\label{wpstab_lin2}
	Let $\nu, \alpha > 0$,  $u_0 \in H^\ell(0,L)$, where $\ell = 0$ or $\ell=1$. Let $(\mu,N)$ be an admissible decay rate-mode pair such that $N$ satisfy $\lambda_N < \frac{\alpha}{\nu} < \lambda_{N+1}$ and $\mu$ satisfy
	\begin{eqnarray}\label{mucond}
		&2(\alpha-\nu\lambda_1) \left(1 - \frac{1}{(N+1)^2}\right)^{-1} < \mu < 2\nu \lambda_{N+1}.
	\end{eqnarray}
	Then, the solution $u$ of linear problem \eqref{pde_l}, which satisfies the feedback control law $u(L,t) = g(P_N u(t))$, belongs to $X_T^\ell$ and has the decay estimate
	\begin{equation*}
		\|u(t)\|_{H^\ell(0,L)} \leq c_k e^{-\rho t} \|u_0\|_{H^\ell(0,L)} \quad \text{for } t \geq 0 \text{ a.e.},
	\end{equation*}
	where $\rho = \nu\lambda_1 - \alpha + \frac{\mu}{2}(1 - \frac{1}{(N+1)^2})$ and, $c_k$ is a nonnegative constant depending on $k$ and independent of $u_0$.
\end{thm}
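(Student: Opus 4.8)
The plan is to follow the same three-step scheme behind Theorem \ref{wpstab_lin}, exploiting that the backstepping map $T_N = I + \Upsilon_k P_N$ of \eqref{bt} is bounded and boundedly invertible on $H^\ell(0,L)$ once $(\mu,N)$ is an admissible decay rate-mode pair (Definition \ref{ratemodepair}), and to reduce the decay of the plant \eqref{pde_l} to that of the target \eqref{pde_tl}. First I would set $w \doteq T_N^{-1}u$, so that $w$ solves \eqref{pde_tl} with initial datum $w_0 = (I-\Phi_N)u_0 \in H^\ell(0,L)$, while $u = T_N w$ solves \eqref{pde_l} together with the feedback law $u(L,t) = g(P_N u(t))$ of \eqref{controller}. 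Since \eqref{pde_tl} is a linear parabolic equation with homogeneous Dirichlet data perturbed only by the bounded finite-rank operator $\mu P_N$, standard Galerkin/semigroup theory yields a unique $w \in X_T^\ell$; boundedness of $T_N$ then gives $u \in X_T^\ell$, and any exponential decay of $w$ transfers to $u$ with constant $c_k = \|T_N\|_{H^\ell(0,L)\to H^\ell(0,L)}\,\|T_N^{-1}\|_{H^\ell(0,L)\to H^\ell(0,L)}$. Thus the whole content reduces to decay of the target at the rate $\rho$.

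For $\ell=0$ I would test \eqref{pde_tl} with $w$, integrate by parts using $w(0,t)=w(L,t)=0$ and the projection identity $(P_N w,w)_2 = \|P_N w\|^2$, to obtain
\[
\tfrac12\tfrac{d}{dt}\|w\|^2 + \nu\|w_x\|^2 - \alpha\|w\|^2 + \mu\|P_N w\|^2 = 0 .
\]
Grouping Fourier modes as in the proof of the Poincaré-type inequality of the preliminaries gives $\|w_x\|^2 \ge \lambda_1\|P_N w\|^2 + \lambda_{N+1}\|(I-P_N)w\|^2$, whence
\[
\tfrac12\tfrac{d}{dt}\|w\|^2 \le -(\nu\lambda_1+\mu-\alpha)\|P_N w\|^2 - (\nu\lambda_{N+1}-\alpha)\|(I-P_N)w\|^2 .
\]
Here the two sides of \eqref{mucond} do exactly what is needed. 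The lower bound $2(\alpha-\nu\lambda_1)(1-\tfrac{1}{(N+1)^2})^{-1}<\mu$ is equivalent to $\rho = \nu\lambda_1-\alpha+\tfrac{\mu}{2}(1-\tfrac{1}{(N+1)^2})>0$, and the upper bound $\mu<2\nu\lambda_{N+1}$ is equivalent to $\nu\lambda_{N+1}-\alpha \ge \rho$ (using $\lambda_{N+1}-\lambda_1 = (1-\tfrac{1}{(N+1)^2})\lambda_{N+1}$); since also $\nu\lambda_1+\mu-\alpha \ge \rho$ trivially, both coefficients dominate $\rho$, so $\tfrac12\tfrac{d}{dt}\|w\|^2 \le -\rho\|w\|^2$ and Grönwall yields $\|w(t)\| \le e^{-\rho t}\|w_0\|$. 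The standing hypothesis $\lambda_N<\alpha/\nu<\lambda_{N+1}$ guarantees $\nu\lambda_{N+1}-\alpha>0$, hence the $\mu$-window in \eqref{mucond} is nonempty and $N$ is precisely the number of unstable modes.

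The case $\ell=1$ is parallel: testing \eqref{pde_tl} with $-w_{xx}$ (all boundary terms vanish because $w$ and $w_t$ vanish at $x=0,L$) produces the same identity with $w,w_x,P_Nw$ replaced by $w_x,w_{xx},(P_Nw)_x$, so the identical low/high-mode splitting yields decay of $\|w_x\|$ at the very same rate $\rho$, and together with the $L^2$ bound this controls $\|w\|_{H^1(0,L)}$. The transfer back to $u$ through $T_N$ and $T_N^{-1}$ is then routine. I expect the genuine obstacle to lie not in this Lyapunov computation but in the standing prerequisite: unlike the large-$N$ regime of Theorem \ref{wpstab_lin}, here $N$ is pinned to the instability index and $\mu$ is only moderately large, so one must ensure that $I+\Upsilon_k P_N$ really is boundedly invertible on $H^\ell(0,L)$ throughout the entire window \eqref{mucond} — i.e. that this $(\mu,N)$ is admissible. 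Securing that invertibility (the new argument flagged in the introduction, carried out in Section \ref{sec_bstrans}) is the delicate point; once it is available, the decay and regularity estimates above close the proof.
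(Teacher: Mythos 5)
Your proposal is correct and shares the paper's overall strategy (Lyapunov estimates on the target \eqref{pde_tl}, then transfer of decay and regularity through $T_N$ and $T_N^{-1}$, with admissibility of $(\mu,N)$ taken as the standing hypothesis), but the core estimate is carried out by a genuinely different route. The paper does not redo the computation: it revisits the $L^2$ and $H^1$ multiplier estimates of Section \ref{sec_wpstab_lin}, where the cross term $2\mu\int_0^L w\,P_Nw\,dx$ is handled via Cauchy--Schwarz, Cauchy's inequality with a free parameter $\epsilon_1$, and the Poincar\'e-type tail bound $\|w-P_Nw\|^2\le\lambda_{N+1}^{-1}\|w_x\|^2$; it then re-optimizes the free parameter (taking $\epsilon_1=1$ and $\epsilon_2=\frac{1}{\lambda_1(N+1)^2}$ instead of the choices made for Theorem \ref{wpstab_lin}) so as to maximize the admissible instability level, which is exactly how the threshold $\alpha/\nu<\lambda_{N+1}$, the window \eqref{mucond}, and the rate $\rho$ emerge. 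You instead use the exact identities $(P_Nw,w)_2=\|P_Nw\|^2$ and $\|w_x\|^2\ge\lambda_1\|P_Nw\|^2+\lambda_{N+1}\|(I-P_N)w\|^2$ and split low and high modes exactly; your verification that both block coefficients dominate $\rho$ under \eqref{mucond} is correct (indeed $\mu<2\nu\lambda_{N+1}$ is equivalent to $\nu\lambda_{N+1}-\alpha>\rho$ via $\lambda_{N+1}-\lambda_1=\bigl(1-(N+1)^{-2}\bigr)\lambda_{N+1}$, and $\nu\lambda_1+\mu-\alpha\ge\rho$ is immediate), and the same two-block splitting goes through for the $-2w_{xx}$ multiplier since $P_Nw$ vanishes at $x=0,L$. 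Your route is arguably cleaner: it makes transparent why $N$ equal to the instability index suffices (the tail is self-damped by $\nu\lambda_{N+1}-\alpha>0$, the low modes are damped by $\mu$) and in fact yields the slightly sharper rate $\min\{\nu\lambda_1+\mu-\alpha,\ \nu\lambda_{N+1}-\alpha\}\ge\rho$. What the paper's parametrized derivation buys is that \eqref{mucond} is exhibited as precisely the boundary of what the $\epsilon$-machinery of Sections \ref{sec_wpstab_lin}--\ref{sec_wpstab_nonlin} tolerates, so the identical argument carries over verbatim to the nonlinear Theorem \ref{wpstab_nonlin2}; if you wanted your version to cover that case too, you would still need to graft your exact splitting onto the perturbed inequality \eqref{nonlin_mult7}.
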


\begin{rem} The set of $\mu$ that satisfies \eqref{mucond} is nonempty as shown in Section \ref{sec_wpstab2}. Moreover, $\rho$ is strictly positive by construction. Similar remarks also apply to Theorem \ref{wpstab_nonlin2} below, where we extend the above result to the nonlinear model.
\end{rem}

\begin{thm}\label{wpstab_nonlin2}
	Let  $\nu, \alpha > 0$, $\kappa\in\{-1,1\}$, $d\in (0,1)$ and $u_0 \in H^\ell(0,L)$, $\ell = 0$ or $\ell=1$, where $\|u_0\|_{H^\ell}$ is sufficiently small in the sense of Theorem \ref{wpstab_nonlin}. Let $(\mu,N)$ be an admissible decay rate-mode pair such that $N$ satisfy $\lambda_N < \frac{\alpha}{\nu} < \lambda_{N+1}$ and $\mu$ satisfies
	\begin{equation*}
		2(\alpha-\nu\lambda_1) (1 - \frac{1}{(N+1)^2})^{-1} < \mu < 2\nu \lambda_{N+1}.
	\end{equation*}
	Then, for any $\rho'$ with $0<\rho^\prime<\rho= \nu\lambda_1 - \alpha + \frac{\mu}{2}(1 - \frac{1}{(N+1)^2})$, the nonlinear problem \eqref{pde_nl} has a solution $u\in X_T^\ell$, which satisfies the feedback control law $u(L,t) = g(P_N u(t))$, and has the decay estimate
	\begin{equation*}
		\|u(t)\|_{H^\ell(0,L)} \leq c_k e^{-\rho' t} \|u_0\|_{H^\ell(0,L)} \quad \text{for } t \geq 0 \text{ a.e.}, 
	\end{equation*}
	where $c_k$ is a nonnegative constant depending on $k$, $d$, and independent of $u_0$.
\end{thm}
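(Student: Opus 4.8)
The plan is to run the same argument as for Theorem~\ref{wpstab_nonlin}, substituting the unprescribed rate $\rho=\nu\lambda_1-\alpha+\frac{\mu}{2}\bigl(1-\frac{1}{(N+1)^2}\bigr)$ for the prescribed rate $\gamma$ and invoking the linear coercivity of Theorem~\ref{wpstab_lin2} in place of that of Theorem~\ref{wpstab_lin}. Working with the nonlinear target model obtained from \eqref{pde_nl} through the transformation \eqref{bt}, I first establish local-in-time existence of a solution $w\in X_T^\ell$ by a contraction-mapping argument. The nonlinearity $f(w)=(I-\Phi_N)\bigl[(T_Nw)^3\bigr]$ is locally Lipschitz on $X_T^\ell$ because $T_N=I+\Upsilon_kP_N$ and $I-\Phi_N$ are bounded on $H^\ell(0,L)$ and, in one space dimension, the cubic map is tamed by the embedding $H^1(0,L)\hookrightarrow L^\infty(0,L)$ when $\ell=1$ and by the Gagliardo--Nirenberg inequality when $\ell=0$. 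No smallness of the data is needed at this stage, so this step is identical to the corresponding one for Theorem~\ref{wpstab_nonlin}.

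Next I derive the a priori decay estimate by a Lyapunov argument on the target model. Testing the $w$-equation with $w$ (and, for $\ell=1$, additionally with $-w_{xx}$) yields the identity
\[
\tfrac12\tfrac{d}{dt}\|w\|^2+\nu\|w_x\|^2-\alpha\|w\|^2+\mu\|P_Nw\|^2+\bigl(f(w),w\bigr)_2=0 .
\]
The linear terms are exactly those analyzed in Theorem~\ref{wpstab_lin2}: using the Poincaré-type inequality $\|w-P_Nw\|^2\le\lambda_{N+1}^{-1}\|w_x\|^2$ together with the window \eqref{mucond}, they can be bounded below so as to leave a strictly positive residual dissipation, giving
\[
\tfrac12\tfrac{d}{dt}\|w\|^2+\rho\|w\|^2+\delta\|w_x\|^2\le-\bigl(f(w),w\bigr)_2,\qquad \delta>0 .
\]
The cubic term is handled by the boundedness of $I-\Phi_N$ and $T_N$ followed by Gagliardo--Nirenberg, which gives $|(f(w),w)_2|\lesssim\|w_x\|\,\|w\|^3$ at the $L^2$ level and the analogous bound involving $\|w_{xx}\|$ at the $H^1$ level. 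Splitting this through Cauchy's inequality with a free parameter $\epsilon$ — exactly as in Theorem~\ref{wpstab_nonlin}, which is why the admissible smallness threshold carries the factor $\sqrt{\epsilon(2\rho-\epsilon\lambda_1)}$ — the gradient part is absorbed into $\delta\|w_x\|^2$ and the remaining higher-order term is dominated by the smallness of the state. This produces $\frac{d}{dt}\|w(t)\|_{H^\ell}^2\le-2\rho'\|w(t)\|_{H^\ell}^2$ for any prescribed $\rho'<\rho$.

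A standard continuation/bootstrap argument then upgrades the local solution to a global one: the smallness of $w_0=T_N^{-1}u_0$ together with the decay just obtained keeps the state inside the smallness regime for all $t\ge0$, so $w\in X_T^\ell$ for every $T$ and $\|w(t)\|_{H^\ell}\le c\,e^{-\rho' t}\|w_0\|_{H^\ell}$. Transferring back through \eqref{bt} and using the bounded invertibility of $T_N$ established in Section~\ref{sec_bstrans}, I obtain $\|u(t)\|_{H^\ell}\le\|T_N\|\,\|w(t)\|_{H^\ell}\le c\,\|T_N\|\,\|T_N^{-1}\|\,e^{-\rho' t}\|u_0\|_{H^\ell}$, which is the asserted estimate with $c_k$ depending only on the kernel $k$ and on $d$; the same transfer shows that $u$ lies in $X_T^\ell$ and obeys the feedback law $u(L,t)=g(P_Nu(t))$, as in the linear case.

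The step I expect to be the main obstacle is closing the nonlinear estimate with only the fixed margin $\rho$ at my disposal. Unlike Theorem~\ref{wpstab_nonlin}, where $\mu$ and $N$ may be enlarged to create an arbitrarily wide coercivity gap, here $N$ is pinned at the instability level $\lambda_N<\alpha/\nu<\lambda_{N+1}$ and $\mu$ is confined to the window \eqref{mucond}, so the surplus dissipation available to absorb the cubic term cannot be made large at will. The delicate points are therefore verifying that the admissible range for $\epsilon$ (the analogue of the condition $\epsilon\lambda_1\le\max\{\nu-\tfrac{\mu}{N+1},2\gamma\}$ of Theorem~\ref{wpstab_nonlin}, now with $\rho$ in place of $\gamma$) remains nonempty under the two-sided constraint on $\mu$, and tracking how the operator norms $\|T_N\|$ and $\|T_N^{-1}\|$ enter the Gagliardo--Nirenberg absorption so that the smallness threshold on $\|u_0\|_{H^\ell}$ stays quantitatively compatible with this fixed margin at both the $L^2$ and $H^1$ levels. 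Once the coercivity inequality of Theorem~\ref{wpstab_lin2} is in hand, the remaining steps are routine.
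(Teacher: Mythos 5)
Your proposal is correct and follows essentially the same route as the paper: rerun the multiplier/Lyapunov analysis of the nonlinear target model with the parameter choices $\epsilon_1=1$, $\epsilon_2=\frac{1}{\lambda_1(N+1)^2}$ dictated by the window \eqref{mucond}, so that the rate $\rho$ replaces $\gamma$, and then absorb the cubic term under the smallness assumption before transferring back through the bounded invertible transformation $T_N$. The only cosmetic difference is that the paper closes the resulting differential inequality $y'+a y\le b y^3$ via the explicit Bernoulli-type Lemma \ref{lem_nonlinh1} (which is where the constant $d$ and the factor $\frac{1}{\sqrt{1-d^2}}$ enter $c_k$), whereas you invoke an equivalent continuation/bootstrap argument.
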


\subsection{Orientation}
This paper consists of six sections. In Section \ref{sec_bstrans}, we find sufficient conditions for the backstepping kernel so that the corresponding backstepping transformation maps the linearized plant to a desired target model. Using these conditions, we obtain an explicit representation of the kernel. Then, we prove the invertibility of the backstepping transformation with a bounded inverse. In Sections \ref{sec_wpstab_lin} and \ref{sec_wpstab_nonlin}, we answer the question stated in Problem \ref{prob}, and in Section \ref{sec_wpstab2} we answer the question stated in Problem \ref{prob2}. Finally, in Section \ref{numerics}, we present some numerical simulations verifying our theoretical results.

\section{Backstepping transformation: existence and bounded invertibility} \label{sec_bstrans}
\subsection{Kernel}
Here, we state the sufficient conditions for the kernel so that the  backstepping transformation maps the target model to the linearized plant. Adapting arguments given in \cite[Section 4]{krsticbook} to the operator in \eqref{bt}, it follows that $k$ is a suitable kernel if it solves the boundary value problem
\begin{equation} \label{ker_pde}
	\begin{cases}
		\nu (k_{xx} - k_{yy}) + \mu k = 0, \quad  (x,y) \in \Delta_{x,y}, \\
		k(x,0) = 0, \quad k(x,x) = -\frac{\mu x}{2\nu}, \quad x \in (0,L),
	\end{cases}
\end{equation}
on $\Delta_{x,y}$. Moreover, the solution can be expressed explicitly as
\begin{equation*}
	\label{ksolrep}k(x,y) = -\frac{\mu y}{2 \nu} \sum\limits_{m=0}^\infty \left(-\frac{\mu}{4 \nu}\right)^m \frac{(x^2 - y^2)^m}{m! (m+1)!},
\end{equation*}
which is a uniformly and absolutely convergent series on $\overline{\Delta_{x,y}}$, closure of $\Delta_{x,y}$. It is important that $P_N$ and $\frac{d^2}{dy^2}$ commutes in derivation of \eqref{ker_pde}.

\subsection{Invertibility}
In this section, we discuss the bounded invertibility of the backstepping transformation on $H^\ell(0,L)$, $\ell = 0,1,2$. 

At first, we argue that the invertibility does not necessarily hold for every choice of $\mu>0$ in \eqref{ker_pde}. To infer this, let $\Upsilon_k : H^\ell (0,L) \to H^\ell (0,L)$ be the integral operator defined by $(\Upsilon_k \psi)(x) \doteq \int_0^x k(x,y) \psi(y) dy$. Let $N=1$ as an example and consider the backstepping transformation $I + \Upsilon_k P_N=I + \Upsilon_k P_1$, which uses a single Fourier mode.  Let $\mu>0$ be such that $(e_1,\Upsilon_ke_1)_2=-1$ (such $\mu$ exists by intermediate value theorem). Let $\varphi$ be a function in $H^\ell(0,L)$ with the property $\hat{\varphi}_1=(\varphi,e_1)_2\neq 0$. We claim that there is no $\psi$ with $\varphi = (I + \Upsilon_k P_1)\psi$ because if it is the case then setting  $v = \Upsilon_k P_1\psi$, we see that $v$ must satisfy $v = \Upsilon_k P_1 (\varphi - v)=(\hat \varphi_1 - \hat{v}_1) \Upsilon_ke_1$. Taking $L^2-$inner product of both sides with $e_1$ and using $(e_1,\Upsilon_ke_1)_2=-1$, it follows that $0=-\hat \varphi_1\neq 0$, which is a contradiction. Hence, $(I + \Upsilon_k P_1)$ cannot be invertible if $\mu$ (equivalently the associated kernel $k=k(x,y;\mu)$) is such that  $(e_1,\Upsilon_ke_1)_2=-1$.

Note however, if $\mu>0$ is such that $(e_1,\Upsilon_ke_1)_2\neq -1$, then given $\varphi$, the function $\psi=\varphi-v$, where $v=\hat{v}_1\Upsilon_ke_1$ with $\displaystyle \hat{v}_1=\frac{\hat{\varphi}_1}{1+(e_1,\Upsilon_ke_1)_2}$ satisfies $(I + \Upsilon_k P_1)\psi=\varphi$.  The preimage
\begin{equation*}
	\varphi-v=\varphi-\frac{\hat{\varphi}_1}{1+(e_1,\Upsilon_ke_1)_2}\Upsilon_ke_1
\end{equation*}
leads to a well-defined inverse operator in the form $I-\Phi_1$, where $$\displaystyle\Phi_1\varphi=\frac{\hat{\varphi}_1}{1+(e_1,\Upsilon_ke_1)_2}\Upsilon_ke_1.$$

Now, let us consider the case of two modes ($N=2$) and the transformation $I+\Upsilon_kP_2$. Suppose $\mu>0$ is such that $(e_1,\Upsilon_ke_1)_2\neq -1$ so that the operator $\Phi_1$ given in above paragraph is well-defined. Let us write $\varphi = (I + \Upsilon_k P_2)\psi$ and set $v = \Upsilon_k P_2\psi$. Then, we have
\begin{equation*}
	\begin{split}
		v &= \Upsilon_k P_{2}(\varphi - v)=\Upsilon_k P_{2}\varphi-\Upsilon_k P_{1}v-\hat{v}_2\Upsilon_k e_2,
	\end{split}
\end{equation*}
from which it follows that $(I+\Upsilon_k P_{1})v=\Upsilon_k P_{2}\varphi - \hat{v}_{2}\Upsilon_k e_{2}.$ Now, applying $I-\Phi_1$ to both sides, we get 
\begin{equation*}
	v=(I - \Phi_1)[\Upsilon_k P_{2}\varphi] - \hat{v}_{2}(I - \Phi_1)[\Upsilon_k e_{2}].
\end{equation*}
Taking inner product of above expression with $e_2$ and rearranging the terms, we obtain
\begin{equation*}
	\hat v_{2}=\frac{\left((I - \Phi_1)[\Upsilon_k P_{2}\varphi],e_{2}\right)_2}{1+\left((I-\Phi_1)[\Upsilon_k e_{2}],e_{2}\right)_2},
\end{equation*} provided $\mu>0$ is also such that $\left((I-\Phi_1)[\Upsilon_k e_{2}],e_{2}\right)_2\neq -1.$ Under the aforementioned two conditions on $\mu$, the inverse takes the form $I-\Phi_2$, where 
\begin{equation*}
	\Phi_{2}\varphi=(I - \Phi_1)[\Upsilon_k P_{2}\varphi]
	-\frac{\left((I - \Phi_1)[\Upsilon_k P_{2}\varphi],e_{2}\right)_2}{1+\left((I-\Phi_1)[\Upsilon_k e_{2}],e_{2}\right)_2} 
	\times(I - \Phi_1)[\Upsilon_k e_{2}].
\end{equation*}
The above argument can be extended to any $N>2$ via recursion (whose details are given in Step 3 in the proof of Lemma \ref{invlem} below). The following definition plays a crucial role in order for such recursive argument to work:
\begin{defn}[Admissible decay-mode pair]\label{ratemodepair}
	Let $\mu>0$ and $N\in\mathbb{Z}_+$. Then, $(\mu,N)$ is said to be an \textit{admissible finite-dimensional backstepping decay rate-mode pair} if
	\begin{equation*}
		\left((I-\Phi_{j-1})[\Upsilon_k e_{j}],e_{j}\right)_2\neq -1,
	\end{equation*}
	for $1\le j\le N$, where $\Phi_0 \doteq 0$ and
	\begin{equation*}
		\Phi_{j}\varphi=(I - \Phi_{j-1})[\Upsilon_k P_{j}\varphi]
		-\frac{\left((I - \Phi_{j-1})[\Upsilon_k P_{j}\varphi],e_{j}\right)_2}{1+\left((I-\Phi_{j-1})[\Upsilon_k e_{j}],e_{j}\right)_2} \times(I - \Phi_{j-1})[\Upsilon_k e_{j}],
	\end{equation*}
	for $1\le j\le N$.
\end{defn}

\begin{lem} \label{invlem}Let $\ell\in\{ 0,1 ,2\}$ and $(\mu,N)$ be an admissible decay-mode pair. Then,
	$T_N = I + \Upsilon_k P_N : H^\ell(0,L) \to H^\ell(0,L)$, is bounded invertible. Moreover $T_N^{-1}$ can be written as $I - \Phi_N$, where $\Phi_N : L^2(0,L) \to H^\ell(0,L)$ is linear bounded.
\end{lem}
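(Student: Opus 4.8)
The plan is to prove the statement by induction on $N$, mirroring the explicit computations already carried out for $N=1$ and $N=2$, with the recursively defined operators $\Phi_j$ of Definition \ref{ratemodepair} serving as the candidate inverses. Before running the induction I would record the two structural facts that make everything work. First, $\Upsilon_k P_j$ has finite rank: since $P_j\varphi=\sum_{i=1}^j \hat\varphi_i e_i$, we have $\Upsilon_k P_j\varphi=\sum_{i=1}^j \hat\varphi_i\,\Upsilon_k e_i$, a combination of the fixed functions $\Upsilon_k e_i$ with coefficients $\hat\varphi_i=(\varphi,e_i)_2$ that are bounded $L^2$-functionals of $\varphi$. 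Second, because the kernel $k$ is smooth on $\overline{\Delta_{x,y}}$ by the uniformly convergent series \eqref{ksolrep}, each $\Upsilon_k e_i\in C^\infty[0,L]\subset H^\ell(0,L)$ for every $\ell$; hence $\Upsilon_k P_j:L^2(0,L)\to H^\ell(0,L)$ is bounded for $\ell\in\{0,1,2\}$. This smoothing is exactly why the inverse gains regularity and why $\Phi_N$ can map $L^2$ into $H^\ell$ even though $\varphi$ is only assumed in $L^2$.

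For the inductive step I would assume $T_{N-1}=I+\Upsilon_k P_{N-1}$ is bounded invertible on $H^\ell$ with inverse $I-\Phi_{N-1}$ and $\Phi_{N-1}:L^2\to H^\ell$ bounded (base case $N=0$: $\Phi_0=0$, $T_0=I$). Given $\varphi$, I solve $T_N\psi=\varphi$ by setting $v=\Upsilon_k P_N\psi$, $\psi=\varphi-v$, and splitting $P_N=P_{N-1}+(\cdot,e_N)_2\,e_N$ to obtain $T_{N-1}v=\Upsilon_k P_N\varphi-\hat v_N\,\Upsilon_k e_N$. Applying $I-\Phi_{N-1}$ and taking the inner product with $e_N$ isolates the single scalar unknown $\hat v_N$; here the admissibility condition $\big((I-\Phi_{N-1})[\Upsilon_k e_N],e_N\big)_2\neq -1$ is precisely what is needed to divide and solve for $\hat v_N$, recovering the formula for $\Phi_N$ in Definition \ref{ratemodepair} as the map $\varphi\mapsto v$. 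This shows $(I-\Phi_N)\varphi$ solves $T_N\psi=\varphi$, so $T_N(I-\Phi_N)=I$; running the same computation with $\varphi=0$ forces $\hat v_N=0$, then $v=0$, then $\psi=0$, giving injectivity of $T_N$ and hence that $I-\Phi_N$ is a genuine two-sided inverse (alternatively, injectivity together with the finite-rank Fredholm alternative yields invertibility).

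To close the induction at the level of operator bounds, I would check that each ingredient of the formula for $\Phi_N$ respects the $L^2\to H^\ell$ mapping: $\Upsilon_k P_N\varphi\in H^\ell$ with norm $\lesssim\|\varphi\|$, the operator $I-\Phi_{N-1}$ preserves $H^\ell$ by hypothesis, the scalar $\hat v_N$ is a bounded $L^2$-functional of $\varphi$ (nonzero constant denominator), and $(I-\Phi_{N-1})[\Upsilon_k e_N]$ is a fixed element of $H^\ell$. Combining these gives $\|\Phi_N\varphi\|_{H^\ell}\lesssim\|\varphi\|$, and restricting to $H^\ell\hookrightarrow L^2$ then shows $I-\Phi_N:H^\ell\to H^\ell$ is bounded, completing the step.

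I expect the main obstacle to be less the algebra of the recursion than the fact that the standard route to backstepping invertibility is unavailable here: because of the projection $P_N$, the operator $\Upsilon_k P_N$ is not a Volterra operator, so the usual successive-approximation/Neumann-series argument for the inverse transformation does not apply. The substitute is to exploit the finite-rank structure, and the content of the admissibility hypothesis is exactly that the sequence of pivots $1+\big((I-\Phi_{j-1})[\Upsilon_k e_j],e_j\big)_2$ never vanishes, which is what lets the recursion proceed through all $N$ stages. The remaining technical care is bookkeeping the $L^2\to H^\ell$ regularity across the recursion, which is controlled entirely by the smoothness of $k$ furnished by \eqref{ksolrep}.
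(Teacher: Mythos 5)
Your proposal is correct and follows essentially the same route as the paper: induction on $N$, splitting $P_N=P_{N-1}+(\cdot,e_N)_2\,e_N$, applying the inductive inverse $I-\Phi_{N-1}$, isolating the scalar $\hat v_N$ via the admissibility (nonvanishing pivot) condition, and reading off the recursive formula for $\Phi_N$ together with its $L^2\to H^\ell$ boundedness from the smoothness of $k$. The only cosmetic differences are your base case $N=0$ versus the paper's $N=1$ and your explicit injectivity argument (setting $\varphi=0$) in place of the paper's uniqueness-of-the-right-inverse remark; both are sound.
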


\begin{proof}
	We write the backstepping transformation in operator form as $\varphi = (I + \Upsilon_k P_N)\psi.$ Set $v = \Upsilon_k P_N\psi$. Then, we have $\psi =  \varphi - v$ and we get
	\begin{equation}\label{vequation}
		v = \Upsilon_k P_N(\varphi - v).
	\end{equation}
	Given $\psi\in H^\ell(0,L)$, we have $(I + \Upsilon_k P_N)\psi\in H^\ell(0,L).$ Therefore, we have the inclusion $R(I + \Upsilon_k P_N)\subset H^\ell(0,L).$
	We will prove the invertibility with induction on $N$. \\
	
	\emph{Step 1:} We first consider the case $N=1$. \\
	Using \eqref{vequation} with $N=1$, we write $v = \Upsilon_k P_1 (\varphi - v)$. Note that $P_1\varphi=\hat \varphi_1 e_1$ and $P_1v=\hat v_1 e_1$, where $\hat \varphi_1=(\varphi,e_1)_2$ and $\hat{v}_1=(v,e_1)_2$. Using linearity of $\Upsilon_k$, we get \begin{equation}
		\label{veqnew}v=\Upsilon_k(\hat \varphi_1-\hat v_1)e_1= (\hat \varphi_1 - \hat{v}_1) \Upsilon_ke_1.
	\end{equation}
	Now, we look for a solution in the form $\label{vform} v=\alpha_1 \Upsilon_k e_1.$ Note that then $\hat v_1=(e_1,\alpha_1 \Upsilon_k e_1)_2=\alpha_1\beta_1$, where $\beta_1 = \int_{0}^{L}e_1(s) [\Upsilon_k e_1](s) ds.$
	Using this equality in \eqref{veqnew}, we find that $\alpha_1$ should satisfy $\label{alphajsolve}\alpha_1 \Upsilon_ke_1 = (\hat \varphi_1 - \alpha_1 \beta_1)\Upsilon_k e_1.$ This equation will hold with the choice $\alpha_1(1+\beta_1)=\hat\varphi_1$. Since $(\mu,N)$ is assumed to be an admissible decay rate-mode pair, we have $\beta_1\neq -1$, so that we can take $\alpha_1=\frac{\hat\varphi_1}{1+\beta_1}.$ This shows surjectivity of $I + \Upsilon_k P_1$. Note that we in particular have $v\in H^\ell(0,L)$ (indeed better than this) so that $\psi=\varphi-v\in H^\ell(0,L).$
	
	Note we have $$\psi=\varphi-v=\varphi-\frac{\hat\varphi_1}{1+\beta_1}\Upsilon_k e_1 = \varphi - \frac{1}{1 + \beta_1} \Upsilon_k P_1 \varphi.$$
	Now, we set $\label{phiN} \Phi_1 \varphi\equiv \frac{1}{1 + \beta_1} \Upsilon_k P_1 \varphi.$ It is easy to verify that $I-\Phi_1$ is both a right inverse and a left inverse for $I + \Upsilon_k P_1$ and moreover $\Phi_1: L^2(0,L) \to H^\ell(0,L)$ is a linear bounded operator. Therefore, $(I + \Upsilon_k P_1)^{-1}$ exists and is given by $(I + \Upsilon_k P_1)^{-1}{\varphi}=(I-\Phi_1)\varphi$. \\
	
	\emph{Step 2:} We assume that there exists some $K\ge 1$ such that the statement of the lemma holds true for $N=K$.\\
	
	\emph{Step 3:} Now, we claim that the statement of the lemma must also be true for $N=K+1$. Replacing $N$ by $K+1$ we have
	\begin{equation} \label{invlem_vKplus1}
		\begin{split}
			v = \Upsilon_k P_{K+1}(\varphi - v) =\Upsilon_k P_{K+1}\varphi-\Upsilon_k P_{K}v-\Upsilon_k E_{K+1}v,
		\end{split}
	\end{equation} where we used $P_{K+1}=P_K+E_{K+1}$ with $E_{K+1}$ being the projection onto the $(K+1)-$th Fourier sine mode. Rearranging the terms, we get $(I+\Upsilon_k P_{K})v=\Upsilon_k P_{K+1}\varphi - \hat{v}_{K+1}\Upsilon_k e_{K+1},$
	where $\hat{v}_{K+1}$ is the $(K+1)-$th Fourier sine mode of $v$. By using the induction assumption in Step 2, we obtain
	\begin{equation*}
		v=(I - \Phi_K)[\Upsilon_k P_{K+1}\varphi] - \hat{v}_{K+1}(I - \Phi_K)[\Upsilon_k e_{K+1}].
	\end{equation*}
	Taking inner product in both sides of the above equation with $e_{K+1}$, we get
	\begin{equation*}
		\hat v_{K+1}=\frac{\left((I - \Phi_K)[\Upsilon_k P_{K+1}\varphi],e_{K+1}\right)_2}{1+\left((I-\Phi_K)[\Upsilon_k e_{K+1}],e_{K+1}\right)_2}.
	\end{equation*}
	Note that $\hat v_{K+1}$ is well-defined since the denominator does not vanish above due to the assumption that $(\mu,N)$ is an admissible decay rate-mode pair. Therefore, we find
	\begin{equation}\label{PhiKplus1}
		\begin{split}
			v=&(I - \Phi_K)[\Upsilon_k P_{K+1}\varphi] \\
			&-\frac{\left((I - \Phi_K)[\Upsilon_k P_{K+1}\varphi],e_{K+1}\right)_2}{1+\left((I-\Phi_K)[\Upsilon_k e_{K+1}],e_{K+1}\right)_2} (I - \Phi_K)[\Upsilon_k e_{K+1}]\equiv \Phi_{K+1}\varphi.
		\end{split}
	\end{equation}
	This shows that $I + \Upsilon_k P_{K+1}$ is surjective and it has a right inverse given by $I-\Phi_{K+1}$, where $\Phi_{K+1}$ is given in \eqref{PhiKplus1}. Moreover, given $\varphi$, we see from above that $v$ is uniquely determined, and therefore the right inverse is unique. This implies the right inverse is also a left inverse and  $I + \Upsilon_k P_{K+1}$ is invertible and we have
	\begin{equation*}
		\psi=(I + \Upsilon_k P_{K+1})^{-1}{\varphi} =\varphi-v =(I-\Phi_{K+1})\varphi.
	\end{equation*}
	It can be verified that $\Phi_{K+1}: L^2(0,L) \to H^\ell(0,L)$ is a linear bounded operator.  This follows from the definition of $\Phi_{K+1}$ together with the induction assumption in Step 2.
\end{proof}

\begin{rem}
	Replacing $w$ in the control input $g(t) = \int_0^L k(x,L) P_Nw(y,t) dy$ with $w = (I - \Phi_N)u$, and noting $\Phi_Nu=\Phi_NP_Nu$ we see that $g$ can be expressed as $g(t) =\int_0^L k(L,y) \Gamma_N[P_Nu](y,t)dy,$	where $\Gamma_N \doteq I - P_N\Phi_N: H^\ell(0,L) \to H^\ell(0,L)$ is a bounded operator. Consequently, (i) our control input is indeed feedback type and (ii) it is calculated by using only finitely many Fourier sine modes of the state of solution.
\end{rem}

\begin{rem}
	It is important to know that given $\mu>0$, there exists $N\ge 1$ so that $(\mu,N)$ is admissible, and moreover, $N$ satisfies the conditions given in main results.  We claim that there are indeed infinitely many such admissible pairs.  To see this, observe that admissibility of the pair $(\mu,N)$ is by construction equivalent to the invertibility of the operator $T_N=I+\Upsilon_kP_N$ introduced in this section.  It is easy to see that the sequence of operators $T_N$ converges uniformly in operator norm to the operator $T=I+\Upsilon_k$ as $N\rightarrow \infty$, which is the standard backstepping transformation and is well-known to be invertible, see e.g. \cite{krsticbook} for explicit construction of $T^{-1}$, see \cite{Liu03} and \cite{OzBa} for an abstract proof leading to existence of $T^{-1}$ on Sobolev spaces. Now, given $\mu>0$, if there were not infinitely many admissible $(\mu,N)$, there would exist some $N_0$ such that $(\mu,N)$ would not be admissible for $N\ge N_0$. This would imply  for $N\ge N_0$, $0\in \sigma(T_N)$ (spectrum of $T_N$). However, $T_N\rightarrow T$ uniformly, therefore every point in  $\cap_{N\ge N_0}\sigma(T_N)$ is in the spectrum of $T$. Namely, we deduce $0\in \sigma(T)$. But then $T$ cannot be invertible, a contradiction. Hence, there must exist infinitely many admissible decay rate-mode pairs associated with each $\mu>0$.
\end{rem}

\section{Linear theory} \label{sec_wpstab_lin}
In this section, we prove well-posedness of the linearized model \eqref{pde_l} and establish uniform decay rate estimates for its solutions. Our analysis will be first carried out through the target model \eqref{pde_tl} and then, we will use boundedness of the backstepping transformation and its inverse to deduce that the wellposedness and decay results that we obtain for \eqref{pde_tl} are inherited by \eqref{pde_l}.

Existence of a local solution of \eqref{pde_tl} can be obtained rigorously by using the standard Galerkin method. We will only give a proof showing the uniform exponential decay of solutions of \eqref{pde_tl} at topological levels $H^\ell(0,L)$ with $\ell=0$ and $\ell=1$, which will also imply global existence and uniqueness of solutions. This will be done formally by using the multiplier method. Indeed, one first construct approximate regular solutions in the form $w^m(t)=\sum_{k=1}^m d_{m}^{k}(t)e_k\in C([0,T];H_0^\ell(0,L))\cap L^2(0,T;H^{\ell+1}(0,L))$ via Galerkin method, apply the multiplier technique with these solutions and then use a limiting argument to obtain the same type of final estimates for the soughtafter weak solution.  This procedure is rather standard and will be omitted.

Let us first consider the case $u_0\in L^2(0,L)$. This implies $w_0=(I - \Upsilon_kP_N)^{-1}u_0\in L^2(0,L)$ thanks to the bounded invertibility of the backstepping transformation on $L^2(0,L)$ (see Lemma \ref{invlem}). Now, taking the  $L^2-$inner product of the main equation in \eqref{pde_tl} by $2w$, we get
\begin{equation} \label{l2-inner}
	\frac{d}{dt}\|w(t)\|^2 + 2\nu \|w_x(t)\|^2 - 2\alpha \|w(t)\|^2 \\ 
	+ 2\mu \int_0^L w(x,t) P_Nw(x,t) dx = 0.
\end{equation}
Using the Cauchy-Schwarz inequality and Cauchy's inequality with $\epsilon_1 > 0$, we see that the last term at the left hand side of \eqref{l2-inner} is bounded from below as
\begin{equation*}
	\begin{split}
	2\mu \int_0^L w(x,t) P_Nw(x,t) dx =& -2\mu \int_0^L w(x,t) \left(w - P_Nw\right)(x,t) dx + 2\mu\|w(t)\|^2 \\
	\geq& (2\mu - \epsilon_1\mu) \|w(t)\|^2 - \frac{\mu}{\epsilon_1} \|\left(w-P_Nw\right)(t)\|^2.
	\end{split}
\end{equation*}
Using the above bound in \eqref{l2-inner}, we get
\begin{equation*}
	\frac{d}{dt}\|w(t)\|^2 + 2\nu \|w_x(t)\|^2 - 2\alpha \|w(t)\|^2
	+ (2\mu - \mu\epsilon_1) \|w(t)\|^2 - \frac{\mu}{\epsilon_1} \|\left(w-P_Nw\right)(t)\|^2 \leq 0.
\end{equation*}
Employing the Poincaré type inequality, we obtain
\begin{equation}
	\label{l2h1} \frac{d}{dt}\|w(t)\|^2 + 2\left(\nu - \frac{\mu}{2\epsilon_1 \lambda_{1} (N+1)^2}\right) \|w_x(t)\|^2 + (2\mu - \mu\epsilon_1 - 2\alpha)\|w(t)\|^2 \leq 0.
\end{equation}
For $\epsilon_1 > 0$ satisfying
\begin{equation} \label{poincare}
	\nu - \frac{\mu}{2\epsilon_1 \lambda_{1}(N+1)^2} > 0,
\end{equation}
we apply the Poincaré inequality to the second term at the left hand side of \eqref{l2h1}:
\begin{equation} \label{l2-ineq}
	\frac{d}{dt}\|w(t)\|^2 + 2 \biggl[\nu\lambda_1 - \alpha	+ \mu \left(1 - \frac{\epsilon_1}{2} - \frac{1}{2 \epsilon_1 (N+1)^2}\right) \biggr]\|w(t)\|^2 \leq 0.
\end{equation}
To maximize the stabilizing effect one must choose $\epsilon_1 = \frac{1}{N+1}$. Note that \eqref{poincare}  with $\epsilon_1 = \frac{1}{N+1}$ will hold provided
\begin{equation} \label{cond_N1}
	N > \frac{\mu}{2\nu\lambda_1} - 1.
\end{equation}
Therefore, under condition \eqref{cond_N1} on $N$, we get
\begin{equation} \label{l2decay_ineq}
	\frac{d}{dt}\|w(t)\|^2 + 2 \gamma \|w(t)\|^2 \leq 0,
\end{equation}
where
\begin{equation} \label{gamma}
	\gamma = \nu\lambda_1 - \alpha + \mu (1 - {1}/{(N+1)}).
\end{equation}
In order to have decay, we must have $\gamma > 0$ which imposes
\begin{equation} \label{cond_N2}
	N > \frac{\mu}{\mu + \nu\lambda_1 - \alpha} - 1.
\end{equation}
Finally, integrating \eqref{l2decay_ineq}, we deduce 
\begin{equation} \label{l2decayrate}
	\|w(t)\| \leq e^{-\gamma t} \|w_0\|, \quad \text{ for } t \geq 0 \text{ a.e.},
\end{equation}
under the assumption that
\begin{equation} \label{cond_N}
	N > \max \left\{ \frac{\mu}{2\nu\lambda_1} - 1 , \frac{\mu}{\mu + \nu\lambda_1 - \alpha} - 1\right\}.
\end{equation}
Next, we obtain uniform estimates for solutions of \eqref{pde_tl} in the case $u_0\in H^1(0,L)$.  We take $L^2-$inner product of the main equation in \eqref{pde_tl} by $-2w_{xx}$ and deduce
\begin{equation} \label{h1-inner}
	\frac{d}{dt}\|w_x(t)\|^2 + 2\nu \|w_{xx}(t)\|^2 - 2\alpha \|w_x(t)\|^2 - 2\mu \int_0^L w_{xx}(x,t) P_Nw(x,t) dx = 0.
\end{equation}
Applying the Cauchy-Schwarz inequality and then Cauchy's inequality with $\epsilon_2 > 0$, the last term at the left hand side of the above identity can be bounded from below as
\begin{equation} \label{h1est-1}
	\begin{split}
		-2\mu \int_0^L w_{xx}(x,t) P_Nw(x,t) dx
		=& 2\mu \int_0^L w_{xx}(x,t) (w-P_Nw)(x,t) dx \\
		&- 2\mu \int_0^L w_{xx}(x,t) w(x,t) dx \\
		\geq& -\mu\epsilon_2 \|w_{xx}(t)\|^2 - \frac{\mu}{\epsilon_2} \|(w-P_Nw)(t)\|^2 \\
		&+ 2\mu  \|w_x(t)\|^2.
	\end{split}
\end{equation}
Next we use the Poincaré type inequality to get
\begin{equation*}
	-2\mu \int_0^L w_{xx}(x,t) P_Nw(x,t) dx \geq -\mu\epsilon_2 \|w_{xx}(t)\|^2
	+ \left(2\mu - \frac{\mu}{\epsilon_2\lambda_1 (N+1)^2}\right)\|w_x(t)\|^2.
\end{equation*}
Combining the above estimate with \eqref{h1-inner}, we obtain
\begin{equation} \label{h1-inner-2}
	\frac{d}{dt}\|w_x(t)\|^2 + 2\left(\nu - \frac{\mu \epsilon_2}{2}\right) \|w_{xx}(t)\|^2 
	+ 2 \left(\mu - \alpha - \frac{\mu}{2 \epsilon_2\lambda_1 (N+1)^2}\right)\|w_x(t)\|^2 \leq 0.
\end{equation}
Now, for a given $\mu$, we can find $\epsilon_2>0$ such that
\begin{equation} \label{poincareh}
	\nu - \frac{\mu \epsilon_2}{2} > 0.
\end{equation}
For this choice of $\epsilon_2$, we apply the Poincaré inequality to the second term at the left hand side of \eqref{h1-inner-2} and get
\begin{equation} \label{h1-inner-3}
	\frac{d}{dt}\|w_x(t)\|^2 + 2\biggl[\nu \lambda_1 - \alpha
	+ \mu \left(1 - \frac{\epsilon_2 \lambda_1}{2} - \frac{1}{2\epsilon_2\lambda_1(N+1)^2}\right)\biggr] \|w_x(t)\|^2 \leq 0.
\end{equation}
Observe that the inner parenthesis is maximized for the choice $\epsilon_2 = \frac{1}{\lambda_1 (N+1)}$. Using this choice we rewrite \eqref{h1-inner-3} and get
\begin{equation} \label{h1decay_ineq}
	\frac{d}{dt}\|w_x(t)\|^2 + 2\gamma \|w_x(t)\|^2 \leq 0,
\end{equation}
where $\gamma$ is given by \eqref{gamma}. Note that letting $\epsilon_2 = \frac{1}{\lambda_1 (N+1)}$ in \eqref{poincareh} yields the same condition \eqref{cond_N1} on $N$ as we obtained in the $L^2-$decay case. Finally, combining \eqref{h1decay_ineq} with \eqref{l2decay_ineq}, we obtain the following decay result
\begin{equation*}
	\|w(t)\|_{H^1(0,L)} \leq e^{-\gamma t} \|w_0\|_{H^1(0,L)}, \quad \text{ for } t \geq 0 \text{ a.e.},
\end{equation*}
under condition \eqref{cond_N} on $N$. Finally, we deduce $w\in X_T^\ell$ by using the definition of norm of $X_T^\ell$ once we integrate \eqref{l2decay_ineq} and \eqref{h1decay_ineq} over $(0,T)$ for the cases $\ell=0$ and $\ell=1$, respectively.

The following proposition follows.
\begin{prop}\label{l2h1tarlin_decay}
	Let $\nu, \alpha > 0$, $w_0 \in H^\ell(0,L)$, where $\ell = 0$ or $\ell=1$, $\mu>\alpha - \nu \lambda_1 \geq 0$, $N$ satisfy \eqref{cond_N}, and let $(\mu,N)$ be admissible (Definition \ref{ratemodepair}). Then, the solution $w$ of \eqref{pde_tl}, which belongs to $X_T^\ell$, satisfies
	\begin{equation*}
		\|w(t)\|_{H^\ell(0,L)} \leq e^{-\gamma t} \|w_0\|_{H^\ell(0,L)}, \quad \forall t\geq 0,
	\end{equation*}
	where $\gamma$ is given by \eqref{gamma}.
\end{prop}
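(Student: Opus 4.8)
The plan is to establish the decay estimate by the multiplier (energy) method applied directly to the target model \eqref{pde_tl}, treating the cases $\ell=0$ and $\ell=1$ separately and then combining them. Throughout, the computations are formal but are made rigorous in the standard way by first running them on the Galerkin approximants $w^m$ and passing to the limit; I would suppress this routine step since the paper has already flagged it as standard.

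For the $L^2$-level ($\ell=0$), I would take the $L^2$-inner product of the evolution equation in \eqref{pde_tl} with $2w$. Integration by parts turns the diffusion term into $2\nu\|w_x\|^2$ (the boundary contributions vanish since $w$ has zero trace), the reaction term contributes the destabilizing $-2\alpha\|w\|^2$, and the genuinely new feature is the projection term $2\mu(w,P_Nw)$. The key algebraic step is to rewrite this as $2\mu\|w\|^2-2\mu(w,w-P_Nw)$ and to absorb the cross term via Cauchy's inequality with a free parameter $\epsilon_1>0$, after which the Poincar\'e-type inequality $\|w-P_Nw\|^2\le\lambda_{N+1}^{-1}\|w_x\|^2$ (valid because $w\in H_0^1(0,L)$) converts the remainder into a multiple of $\|w_x\|^2$. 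This yields a differential inequality in which $\|w_x\|^2$ carries the coefficient $2(\nu-\tfrac{\mu}{2\epsilon_1\lambda_1(N+1)^2})$ and $\|w\|^2$ carries $2\mu-\mu\epsilon_1-2\alpha$.

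The heart of the argument is the optimization over $\epsilon_1$: choosing $\epsilon_1=\tfrac{1}{N+1}$ maximizes the effective zeroth-order coefficient, and the sign constraints then force exactly the two requirements bundled in \eqref{cond_N} --- the first guaranteeing that the coefficient of $\|w_x\|^2$ stays nonnegative so that Poincar\'e may be invoked a second time, the second guaranteeing $\gamma>0$ with $\gamma$ as in \eqref{gamma}. Applying Poincar\'e to the surviving gradient term produces $\frac{d}{dt}\|w\|^2+2\gamma\|w\|^2\le 0$, and Gronwall's lemma gives the stated estimate at the $L^2$ level.

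For the $H^1$-level ($\ell=1$) I would repeat the scheme with the multiplier $-2w_{xx}$. The delicate term is now $-2\mu(w_{xx},P_Nw)$, which I would split, using that $P_N$ commutes with $\tfrac{d^2}{dy^2}$, into a piece yielding $+2\mu\|w_x\|^2$ and a cross piece controlled by the same Cauchy-with-$\epsilon_2$ plus Poincar\'e-type mechanism, now with the optimal choice $\epsilon_2=\tfrac{1}{\lambda_1(N+1)}$. This gives $\frac{d}{dt}\|w_x\|^2+2\gamma\|w_x\|^2\le 0$ with the \emph{same} rate $\gamma$ under the \emph{same} condition \eqref{cond_N}; summing with the $L^2$ inequality yields the $H^1$ decay, and integrating both differential inequalities over $(0,T)$ records the membership $w\in X_T^\ell$. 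The main obstacle I anticipate is purely the bookkeeping of the projection term: unlike the classical case $P_Nw$ is not a monotone multiplier, so decay must be extracted by carefully balancing the two free parameters against the spectral gap $\lambda_{N+1}$, and it is precisely this balance that dictates the admissible range of $(\mu,N)$.
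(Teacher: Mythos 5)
Your proposal is correct and follows essentially the same route as the paper: the multiplier $2w$ (resp.\ $-2w_{xx}$), the decomposition $2\mu(w,P_Nw)=2\mu\|w\|^2-2\mu(w,w-P_Nw)$, Cauchy with a free parameter absorbed via the Poincar\'e-type inequality against the spectral gap $\lambda_{N+1}$, the optimal choices $\epsilon_1=\tfrac{1}{N+1}$ and $\epsilon_2=\tfrac{1}{\lambda_1(N+1)}$ yielding the two constraints in \eqref{cond_N} and the rate \eqref{gamma}, and Gronwall. The only cosmetic difference is that the $H^1$-level splitting needs no commutation of $P_N$ with $\tfrac{d^2}{dy^2}$, just the identity $P_Nw=w-(w-P_Nw)$ followed by integration by parts, which is what the paper does.
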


\begin{rem}
	If one uses all Fourier sine modes ($N=\infty$), then the main equation of the target model takes the form $\label{fullmode}w_t - \nu w_{xx} - \alpha w + \mu w = 0.$
	Using standard multipliers, one can see that $\|w(t)\|_{H^\ell(0,L)} = \mathcal{O}(e^{-(\nu\lambda_1 - \alpha + \mu) t})$, $t \geq 0$. Hence, the condition $\mu > \alpha - \nu\lambda_1$ is necessary for solutions to decay. Therefore, this condition, which also appears in the above proposition is a natural assumption.
\end{rem}

Using boundedness of the backstepping transformation, we deduce that $u$, the solution of \eqref{pde_l}, also belongs to $X_T^\ell$. Moreoever, from Proposition \ref{l2h1tarlin_decay}, we have
\begin{equation} \label{backtoplant1}
	\begin{split}
		\|u(t)\|_{H^\ell(0,L)} &\leq \left(1 + \|k\|_{H^\ell(\Delta_{x,y})}\right) \|w(t)\|_{H^\ell(0,L)}  \\
		&\le \left(1 + \|k\|_{H^\ell(\Delta_{x,y})}\right) e^{-\gamma t} \|w_0\|_{H^\ell(0,L)}.
	\end{split}
\end{equation}
Due to the invertibility of the backstepping transformation with a bounded inverse, we get
\begin{equation} \label{backtoplant2}
	\|w_0\|_{H^\ell(0,L)} \leq \|T_N^{-1}\|_{H^\ell(0,L) \to H^\ell(0,L)} \|u_0\|_{H^\ell(0,L)} .
\end{equation}
Combining \eqref{backtoplant1} and \eqref{backtoplant2}, we conclude that
\begin{equation} \label{backtoplant3}
	\|u(t)\|_{H^\ell(0,L)} \leq c_k e^{-\gamma t} \|u_0\|_{H^\ell(0,L)}, \quad \forall t \geq 0,
\end{equation}
where $c_k = (1 + \|k\|_{H^\ell(\Delta_{x,y})}) \|T_N^{-1}\|_{H^\ell(0,L) \to H^\ell(0,L)}$ is a nonnegative constant independent of the initial datum. \eqref{backtoplant3} also provides continuous dependence of solutions on the data. Hence, complete the proof of Theorem \ref{wpstab_lin}.

\section{Nonlinear theory} \label{sec_wpstab_nonlin}
In this section, we prove Theorem \ref{wpstab_nonlin}. Note that, the same backstepping transformation that was used in Section \ref{sec_bstrans} relates the following target system posed on $(0,L)\times(0,T)$ to the original nonlinear plant
\begin{equation} \label{pde_tnl}
	\begin{cases}
		w_t - \nu w_{xx} - \alpha w + \mu P_Nw = - \kappa Fw, \\
		w(0,t) = w(L,t) = 0, \\
		w(x,0) = w_0(x),
	\end{cases}
\end{equation}
where $\label{nl_term}Fw = T_N^{-1} \left[(T_Nw)^3\right]$.

Local existence of the solution of \eqref{pde_tnl} on an interval $(0,T)$ for sufficiently small $T$ is carried out via the standard fixed point argument (e.g., see \cite{zheng2004nonlinear}). We detail our analysis at Appendix \ref{app-exist}. In the following part, we prove the exponential stabilization of zero equilibrium to the nonlinear target model by proving exponential decay for solutions of \eqref{pde_tnl} in $H^\ell(0,L)$ for $\ell=0$ and $\ell=1$. This amounts to mean that solutions are uniformly bounded for all $t > 0$ which provides global existence in-time.

Now, let us prove the exponential decay for solutions of \eqref{pde_tnl} in $H^\ell(0,L)$ for $\ell=0$ and $\ell=1$.
\subsubsection{$L^2-$decay}
Let $u_0\in L^2(0,L)$ so that $w_0=T_N^{-1}u_0\in L^2(0,L)$. Taking $L^2-$inner product of \eqref{pde_tnl} by $2w$:
\begin{multline} \label{nonlin_mult}
	\frac{d}{dt}\|w(t)\|^2 + 2\nu \|w_x(t)\|^2 - 2\alpha \|w(t)\|^2 + 2\mu \int_0^L w(x,t) P_Nw(x,t) dx \\
	= -2 \kappa \int_0^L  T_N^{-1} \left[(T_Nw)^3\right]w(x,t) dx.
\end{multline}
Applying Cauchy-Schwarz inequality and using boundedness of $T_N^{-1}$ on $L^2(0,L)$, the term at the right hand side of \eqref{nonlin_mult} is dominated by $2c_0\left\| (T_Nw)^3(t) \right\| \|w(t)\|,$ where
\begin{equation}
	\label{c0def}c_0=\|T_N^{-1}\|_{2\rightarrow 2}.
\end{equation}
Applying arguments similar to those in \eqref{nonlin_loc1.5} in Appendix \ref{apriori} and using Cauchy's inequality with $\epsilon$, we get
\begin{equation}
	\begin{split}
	\label{nonlin_est2}
	2c_0\left\|\left(T_Nw\right)^3(t)\right\| \|w(t)\| &\leq  2c_0c_{1}\|w_x(t)\| \|w(t)\|^3 \\
	&\leq \epsilon\|w_x(t)\|^2 + \frac{c_0^2c_{1}^2}{\epsilon}\|w(t)\|^6,
	\end{split}
\end{equation}
where
\begin{equation}
	\label{c1def}c_1=c^*\|T_N\|_{H^{1}(0,L)\rightarrow H^{1}(0,L)}\|T_N\|_{2\rightarrow 2}^2,
\end{equation} 
with $c^*$ being the Gagliardo-Nirenberg constant. Using 
\begin{equation*}
	2\mu \int_0^L w(x,t) P_Nw(x,t) dx \geq (2\mu - \epsilon_1 \mu) \|w(t)\|^2  - \frac{\mu}{\epsilon_1} \|(w - P_Nw)(t)\|^2,
\end{equation*}
with $\epsilon_1 = 1/(N+1)$, we get
\begin{multline*} \label{nonlin_mult3}
	\frac{d}{dt}\|w(t)\|^2 + (2\nu - \epsilon) \|w_x(t)\|^2 + (2\mu  - \frac{\mu}{N+1} - 2\alpha) \|w(t)\|^2 \\
	- \mu (N+1) \|\left(w-P_Nw\right)(t)\|^2\leq \frac{c_0^2c_{1}^2}{\epsilon} \|w(t)\|^6.
\end{multline*}
Thanks to the Poincaré type inequality, we have
\begin{multline}
	\frac{d}{dt}\|w(t)\|^2 + 2\left(\nu - \frac{\epsilon}{2} - \frac{\mu}{2\lambda_1(N+1)}\right) \|w_x(t)\|^2 \\ + \left(2\mu  -  \frac{\mu}{N+1} - 2\alpha\right) \|w(t)\|^2 \leq \frac{c_0^2c_{1}^2}{\epsilon} \|w(t)\|^6.
\end{multline}
One can find $\epsilon$ such that $\nu - \frac{\mu}{2\lambda_1(N+1)} \geq \frac{\epsilon}{2} > 0$ provided the same condition \eqref{cond_N1} on $N$ given in the linear theory holds.  For such $N$, using Poincaré inequality above, we get
\begin{equation} \label{nonlin_mult7}
	\frac{d}{dt}\|w(t)\|^2 + 2\left(\gamma - \frac{\epsilon \lambda_1}{2}\right) \|w(t)\|^2   \leq \frac{c_0^2c_{1}^2}{\epsilon}  \|w(t)\|^6,
\end{equation}
where $\gamma$ is as in \eqref{gamma}. Now, if $\mu > \alpha -\nu \lambda_1 \geq 0$ and $N$ in addition satisfies \eqref{cond_N2}, then \eqref{nonlin_mult7} implies $L^2-$decay provided $\|w_0\|$ is small (see Proposition \ref{l2h1tarnonlin_decay} below). This follows from Lemma \ref{lem_nonlinh1} by taking $y(t)=\|w(t)\|^2$.

\subsubsection{$H^1-$decay}
Here, we consider the case $u_0\in H^1(0,L)$ so that $w_0=T_N^{-1}u_0\in H^1(0,L)$. We take $L^2-$inner product of the main equation in \eqref{pde_tnl} by $-2w_{xx}$, proceed as in \eqref{h1est-1}-\eqref{h1-inner-2} with $\epsilon_2 = \frac{1}{\lambda_1(N+1)}$, and thereby obtain
\begin{multline} \label{nonlin_multh1}
	\frac{d}{dt}\|w_x(t)\|^2 + 2\left(\nu - \frac{\mu}{2\lambda_1(N+1)}\right) \|w_{xx}(t)\|^2  
	+  2 \left(\mu - \alpha - \frac{\mu}{2 (N+1)}\right)\|w_x(t)\|^2 \\ 
	\leq 2 \kappa \int_0^L  T_N^{-1} \left[(T_Nw)^3\right](t) w_{xx}(t) dx.
\end{multline}
Applying Cauchy-Schwarz inequality and using boundedness of $T_N^{-1}$ on $L^2(0,L)$, the right hand side of \eqref{nonlin_multh1} can be estimated as
\begin{equation}
	\label{nonlin_h1est1}
	2 \kappa \int_0^L  T_N^{-1}\left[(T_Nw)^3\right](x,t) w_{xx}(x,t) dx \leq 2c_0 \left\| (T_Nw)^3(t) \right\| \|w_{xx}(t)\|.
\end{equation}
Proceeding as in \eqref{nonlin_loc1.5}, applying Poincaré inequality and Cauchy's inequality with $\epsilon> 0$, we get 
\begin{equation} \label{nonlin_h1est2}
	\begin{split}
		2c_0 \left\| (T_Nw)^3(t) \right\| \|w_{xx}(t)\|
		\leq& \left(2c_{0}c_1 \|w_x(t)\| \|w(t)\|^2\right) \|w_{xx}(t)\| \\
		\leq& 2c_{0}c_1\lambda_1^{-1} \|w_x(t)\|^3 \|w_{xx}(t)\| \\
		\leq& \frac{c_{0}^2c_1^2\lambda_1^{-2}}{\epsilon}\|w_x(t)\|^6 + \epsilon \|w_{xx}(t)\|^2.
	\end{split}
\end{equation}
Using \eqref{nonlin_h1est1}-\eqref{nonlin_h1est2} in \eqref{nonlin_multh1}, we get
\begin{multline*} \label{nonlin_multh2}
	\frac{d}{dt}\|w_x(t)\|^2 + 2\left(\nu - \frac{\epsilon}{2} - \frac{\mu}{2\lambda_1(N+1)}\right) \|w_{xx}(t)\|^2 \\
	+  2 \left(\mu - \alpha - \frac{\mu}{2 (N+1)}\right)\|w_x(t)\|^2  \leq \frac{c_{0}^2c_1^2\lambda_1^{-2}}{\epsilon}\|w_x(t)\|^6.
\end{multline*}
Again, one can find $\epsilon> 0$ such that $\nu - \frac{\mu}{2\lambda_1(N+1)} \geq \frac{\epsilon}{2} > 0$ provided the same condition \eqref{cond_N1} on $N$ given in the linear theory holds. Thus, assuming \eqref{cond_N1}, we can apply the Poincaré inequality and get 
\begin{equation} \label{nonlin_multh4}
	\frac{d}{dt}\|w_x(t)\|^2 + 2 \left(\gamma - \frac{\epsilon \lambda_1}{2}\right) \|w_x(t)\|^2 \leq \frac{c_{0}^2c_1^2\lambda_1^{-2}}{\epsilon}\|w_x(t)\|^6,
\end{equation}
where $\gamma > 0$ is given by \eqref{gamma}. If $\mu > \nu \lambda_1 - \alpha$, then \eqref{nonlin_multh4} yields $H^1-$decay of solutions. This can be obtained from the following lemma by setting $y(t)=\|w_x(t)\|^2$.
\begin{lem} \label{lem_nonlinh1}
	Let $a,b>0$, $d\in(0,1)$, and $y = y(t)> 0$ satisfy
	\begin{equation} \label{lem_ineqh}
		y^\prime(t) +ay(t) - by^3(t)  \leq 0, \quad t \geq 0.
	\end{equation}
	If $y(0)\le d\sqrt{\frac{a}{b}}$, then $y(t) \le \frac{1}{\sqrt{1-d^2}}y(0)e^{-at}$, $t\ge 0$.
\end{lem}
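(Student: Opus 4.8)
The plan is to recognize \eqref{lem_ineqh} as a Bernoulli-type differential inequality and to linearize it by the substitution $z(t) \doteq y(t)^{-2}$, which is legitimate because $y(t)>0$. First I would compute $z' = -2y^{-3}y'$ and insert the hypothesis in the form $y' \le by^3 - ay$; multiplying by $-2y^{-3}>0$ reverses the inequality and produces the \emph{linear} differential inequality $z'(t) - 2a\,z(t) \ge -2b$ for all $t\ge 0$. The positivity of $y$ is exactly what makes this sign manipulation valid, so it is the one place where the standing assumption $y>0$ is genuinely used.

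Next I would integrate the linear inequality with the integrating factor $e^{-2at}$. Writing $(e^{-2at}z)' \ge -2b\,e^{-2at}$ and integrating over $[0,t]$ yields $e^{-2at}z(t) - z(0) \ge -\frac{b}{a}\bigl(1-e^{-2at}\bigr)$, hence
\[
z(t) \ge e^{2at}\Bigl(z(0) - \tfrac{b}{a}\Bigr) + \tfrac{b}{a}.
\]
The smallness hypothesis $y(0)\le d\sqrt{a/b}$ then enters through its reciprocal form $z(0) = y(0)^{-2} \ge \frac{b}{a d^2}$; in particular $d^2 z(0) \ge \frac{b}{a}$, and also $z(0) - \frac{b}{a} > 0$ since $d<1$, so the bracketed coefficient above is positive.

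Finally I would recast the target estimate as a lower bound on $z$: the claim $y(t)\le \frac{1}{\sqrt{1-d^2}}\,y(0)e^{-at}$ is equivalent, after squaring and taking reciprocals, to $z(t) \ge (1-d^2)e^{2at}z(0)$. Comparing this with the lower bound obtained above, the desired inequality reduces to $e^{2at}\bigl(d^2 z(0) - \frac{b}{a}\bigr) + \frac{b}{a} \ge 0$, which holds because $d^2 z(0) - \frac{b}{a}\ge 0$ by the previous paragraph and $\frac{b}{a}>0$. Inverting $z(t)\ge(1-d^2)e^{2at}z(0)$ then gives the stated decay. I do not anticipate a genuine obstacle here; the only care required is bookkeeping the inequality directions under the reciprocal substitution and verifying that the constant $d$ produces exactly the factor $(1-d^2)^{-1/2}$ — which is precisely what confines the trajectory to the decaying regime and rules out the finite-time blow-up that the cubic term would otherwise allow for large data.
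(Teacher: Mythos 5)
Your proof is correct and follows essentially the same route as the paper: the paper also treats \eqref{lem_ineqh} as a Bernoulli inequality and derives the bound $y^2(t) \leq \left(\left(\frac{1}{y^2(0)} - \frac{b}{a}\right)e^{2at} + \frac{b}{a}\right)^{-1}$, which is exactly your lower bound on $z = y^{-2}$. You merely spell out the final step (how $y(0)\le d\sqrt{a/b}$ yields the factor $(1-d^2)^{-1/2}$) that the paper leaves to the reader, and your bookkeeping there is accurate.
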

\begin{proof}
	\eqref{lem_ineqh} is a Bernoulli type differential inequality. Solving this inequality, we obtain
	\begin{eqnarray*}
		&y^2(t) \leq \left(\left(\frac{1}{y^2(0)} - \frac{b}{a}\right) e^{2at} + \frac{b}{a}\right)^{-1}.
	\end{eqnarray*}
	The result follows from the assumption $y(0) \le d\sqrt{\frac{a}{b}}$.
\end{proof}
Applying the above lemma to the $H^\ell-$ estimates, we establish the proposition below for the target system.
\begin{prop}\label{l2h1tarnonlin_decay}
	Let $\nu, \alpha > 0$, $\mu>\alpha - \nu \lambda_1 \geq 0$, $d\in(0,1)$, $w_0 \in H^\ell(0,L)$, $\ell = 0$ or $\ell=1$. Let $\epsilon>0$ be any number satisfying $\epsilon\lambda_1\le \max\{\nu-\frac{\mu}{N+1}, 2\gamma\}$, suppose $\|w_0^{(\ell)}\|_2^2\le d\frac{\sqrt{\epsilon(2\gamma-\epsilon\lambda_1)}\lambda_1^{\ell}}{c_0c_{1}}$ and $N$ satisfies \eqref{cond_N}. Then, the solution of \eqref{pde_tnl}, which belongs to $X_T^\ell$, has the decay estimate
	\begin{equation*} \label{l2tarnonlin_decayy}
		\|w(t)\|_{H^\ell(0,L)} \lesssim e^{-\left(\gamma - \frac{\epsilon \lambda_1}{2}\right) t} \|w_0\|_{H^\ell(0,L)}, \text{ for } t \geq 0 \text{ a.e.},
	\end{equation*}
	where $\gamma$ is given by \eqref{gamma}, $c_0, c_1$ are defined in \eqref{c0def} and \eqref{c1def}, respectively.
\end{prop}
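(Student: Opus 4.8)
The plan is to assemble the conclusion from the two Bernoulli-type differential inequalities already obtained in the $L^2-$ and $H^1-$decay subsections and to feed each of them into Lemma \ref{lem_nonlinh1}; no new estimate is needed, only a careful matching of constants. The entire content is to verify that the stated smallness threshold on $w_0$ is exactly the quantity $d\sqrt{a/b}$ appearing in that lemma, and that the resulting rate is positive.

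First I would treat $\ell=0$. Setting $y(t)=\|w(t)\|^2$, inequality \eqref{nonlin_mult7} is precisely $y'(t)+ay(t)-by^3(t)\le 0$ with $a=2\bigl(\gamma-\tfrac{\epsilon\lambda_1}{2}\bigr)$ and $b=\tfrac{c_0^2c_1^2}{\epsilon}$, where $\gamma$, $c_0$, $c_1$ are as in \eqref{gamma}, \eqref{c0def}, \eqref{c1def}. A direct computation gives $\sqrt{a/b}=\tfrac{\sqrt{\epsilon(2\gamma-\epsilon\lambda_1)}}{c_0c_1}$, so that the hypothesis $\|w_0^{(0)}\|_2^2\le d\tfrac{\sqrt{\epsilon(2\gamma-\epsilon\lambda_1)}\lambda_1^{0}}{c_0c_1}$ is exactly $y(0)\le d\sqrt{a/b}$. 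Lemma \ref{lem_nonlinh1} then yields $\|w(t)\|^2\le (1-d^2)^{-1/2}\|w_0\|^2 e^{-at}$, i.e. $\|w(t)\|\le (1-d^2)^{-1/4}\|w_0\|\,e^{-(\gamma-\epsilon\lambda_1/2)t}$, which is the claim for $\ell=0$ with the $d$-dependent constant absorbed into $\lesssim$.

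Next I would treat $\ell=1$ by the same mechanism applied to $y(t)=\|w_x(t)\|^2$. The $H^1-$seminorm inequality derived just before the statement has the identical linear rate $a=2\bigl(\gamma-\tfrac{\epsilon\lambda_1}{2}\bigr)$ but cubic coefficient $b'=\tfrac{c_0^2c_1^2\lambda_1^{-2}}{\epsilon}$, whence $\sqrt{a/b'}=\tfrac{\lambda_1\sqrt{\epsilon(2\gamma-\epsilon\lambda_1)}}{c_0c_1}$; this matches the stated threshold with $\lambda_1^{\ell}=\lambda_1$, so Lemma \ref{lem_nonlinh1} applies verbatim and gives $\|w_x(t)\|\le (1-d^2)^{-1/4}\|w_{0,x}\|\,e^{-(\gamma-\epsilon\lambda_1/2)t}$. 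Since $w(\cdot,t)\in H_0^1(0,L)$, the Poincaré inequality makes $\|\cdot\|_{H^1(0,L)}$ equivalent to the seminorm $\|\partial_x\cdot\|$ both for $w(t)$ and for $w_0$, so this upgrades to the asserted $H^1-$decay with the equivalence constants absorbed into $\lesssim$.

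The only genuine checks, and the place where the hypotheses on $\epsilon$ and $N$ are spent, are positivity statements. One must ensure that the coefficient of $\|w_x\|^2$ (resp. $\|w_{xx}\|^2$) after the Poincaré-type step stayed nonnegative, which is what condition \eqref{cond_N} on $N$ guarantees and is already built into the derivation of \eqref{nonlin_mult7}; and one must ensure $\gamma-\tfrac{\epsilon\lambda_1}{2}>0$, equivalently $\epsilon\lambda_1<2\gamma$. This last inequality is the delicate point: it simultaneously makes the exponent a genuine decay rate, makes the radicand $\epsilon(2\gamma-\epsilon\lambda_1)$ positive so the smallness threshold is a bona fide positive number, and is the reason the admissible range for $\epsilon$ is capped by $\max\{\nu-\tfrac{\mu}{N+1},2\gamma\}$ in the hypothesis (one takes $\epsilon$ strictly below the $2\gamma$ end so that $a>0$, noting $\gamma>0$ holds under \eqref{cond_N}). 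Everything else is the routine bookkeeping of carrying the constant $(1-d^2)^{-1/4}$ into the symbol $\lesssim$.
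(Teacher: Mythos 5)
Your proposal is correct and follows exactly the route the paper takes: the proposition is obtained by feeding the already-derived Bernoulli inequalities \eqref{nonlin_mult7} (for $\ell=0$) and its $H^1$-seminorm analogue (for $\ell=1$) into Lemma \ref{lem_nonlinh1} with $y(t)=\|w(t)\|^2$ resp.\ $y(t)=\|w_x(t)\|^2$, and your identification of $a=2(\gamma-\tfrac{\epsilon\lambda_1}{2})$, $b=\tfrac{c_0^2c_1^2}{\epsilon}$ (resp.\ $b'=\tfrac{c_0^2c_1^2\lambda_1^{-2}}{\epsilon}$) and of $\sqrt{a/b}$ with the stated threshold $d\tfrac{\sqrt{\epsilon(2\gamma-\epsilon\lambda_1)}\lambda_1^{\ell}}{c_0c_1}$ is exactly the bookkeeping the paper leaves implicit. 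Your remarks on where \eqref{cond_N} and the condition $\epsilon\lambda_1<2\gamma$ are spent are also consistent with the paper's derivation, so nothing further is needed.
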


Proceeding as in \eqref{backtoplant1}-\eqref{backtoplant3}, we conclude that well-posedness and exponential decay results for the target model are also true for the original plant. Hence, we have Theorem \ref{wpstab_nonlin}.

\begin{rem}Recall that $w_0=T_N^{-1}u_0=(I-\Phi_N)u_0$, therefore $$\|w_0\|_{H^{\ell}(0,L)}\le \|T_N^{-1}\|_{H^\ell(0,L)\rightarrow H^\ell(0,L)}\|u_0\|_{H^{\ell}(0,L)}.$$  Hence, the smallness condition of Proposition \ref{l2h1tarnonlin_decay} will hold provided $$\|u_0^{(\ell)}\|_2^2\le d\frac{\sqrt{\epsilon(2\gamma-\epsilon\lambda_1)}\lambda_1^{\ell}}{c_0c_{1}}{\|T_N^{-1}\|_{H^\ell(0,L)\rightarrow H^\ell(0,L)}^{-2}},$$ which characterizes the domain of attraction for the original plant. Note that the above estimate is still implicit, however with the help of the definition of $\Phi_N$, one may be able to find an upper bound for the quantities $c_0$ and $\|T_N^{-1}\|_{H^\ell(0,L) \to H^\ell(0,L)}$, which would yield an explicit estimate for the domain of attraction.  
\end{rem}


\section{Minimal number of Fourier modes for decay} \label{sec_wpstab2}
Section \ref{sec_wpstab_lin} and Section \ref{sec_wpstab_nonlin} focused on the rapid stabilization problem. In this section, we are interested in determining the minimal number of Fourier sine modes of $u$ to gain exponential stabilization of zero equilibrium for some unprescribed and not necessarily large decay rate. To this end, we will assume that we want to gain exponential stabilization with $N$ Fourier sine modes, and we will extract the conditions that the problem parameters must satisfy.  Recall from \eqref{poincare} and \eqref{l2-ineq} that the conditions 
\begin{equation*}
	\mu < 2\epsilon_1 \nu \lambda_1 (N+1)^2, \quad \mu > \frac{\alpha - \nu \lambda_1}{1 - \frac{\epsilon_1}{2} - \frac{1}{2\epsilon_1 (N+1)^2}}
\end{equation*}
must be satisfied by $\mu$. To guarantee that such $\mu$ exists, we must have
\begin{equation} \label{mu-l2}
	\frac{\alpha - \nu \lambda_1}{1 - \frac{\epsilon_1}{2} - \frac{1}{2\epsilon_1 (N+1)^2}} <  2\epsilon_1 \nu \lambda_1 (N+1)^2,
\end{equation}
which holds provided
\begin{equation} \label{alpha_nu}
	\alpha/\nu< \lambda_1 (N+1)^2(2\epsilon_1 - \epsilon_1^2) = \lambda_{N+1}(2\epsilon_1 - \epsilon_1^2).
\end{equation}
Note that for $\lambda_{j+1}>\frac{\alpha}{\nu} > \lambda_j$, where $j \in \{ 1, 2, \dotsc\}$, the first $j$ Fourier modes are unstable while the $(j+1)-$th Fourier mode is stable. Thus, let us maximize the right hand side of \eqref{alpha_nu} in order to maximize the range selection for $\frac{\alpha}{\nu}$. This is done with the choice $\epsilon_1 = 1$, which yields
\begin{equation} \label{instalevel}
	\frac{\alpha}{\nu} < \lambda_{N+1}.
\end{equation}
Similarly, from \eqref{poincareh} and \eqref{h1-inner-3}, we need
\begin{equation} \label{mu-h1}
	\mu < \frac{2\nu}{\epsilon_2}, \quad \mu > \frac{\alpha - \nu \lambda_1}{1 - \frac{\epsilon_2 \lambda_1}{2} - \frac{1}{2 \epsilon_2 \lambda_{1}(N+1)^2}}.
\end{equation}
Combining these inequalities, we deduce that we must have $\frac{\alpha}{\nu} < \frac{2}{\epsilon_2} - \frac{1}{\epsilon_2^2 \lambda_1 (N+1)^2}$ in order to guarantee that $\mu$ satisfying \eqref{mu-h1} exists. We again maximize the right hand side with $\epsilon_2 = \frac{1}{\lambda_1 (N+1)^2}$, and therefore get the same inequality given by \eqref{instalevel}. Consequently, as long as the $(N+1)-$th Fourier mode is stable, we can achieve exponential stabilization of zero equilibrium by using a boundary feedback controller involving only the first $N$ Fourier modes. Now, using the above choices of $\epsilon_1$ and $\epsilon_2$ in \eqref{l2-ineq} and \eqref{h1-inner-3}, respectively, one can see that the decay rate becomes
$
\rho = \nu\lambda_1 - \alpha + \frac{\mu}{2}\left(1 - \frac{1}{(N+1)^2}\right) > 0,
$
where $\mu$ obeys
\begin{eqnarray} \label{mu2}
	&2(\alpha-\nu\lambda_1) \left(1 - \frac{1}{(N+1)^2}\right)^{-1} < \mu < 2\nu \lambda_{N+1}.
\end{eqnarray}
Since conditions \eqref{mu-l2} and \eqref{mu-h1} above are required for the stabilization of both linear and nonlinear target models, the conclusion we obtain here is valid for both. The resuts are stated in Theorem \ref{wpstab_lin2} and Theorem \ref{wpstab_nonlin2}.

\section{Numerical algorithm and simulations} \label{numerics}
Notice that our control input involves the integral operator $\Upsilon_k$ and the inverse backstepping operator $I - \Phi_N$. Thus, before deriving a numerical scheme for our continuous models, we need to express discrete counterparts of these operators.

To this end, let $N_x > 1$ be a positive integer, $1 \leq j \leq i \leq N_x$, and $(x_i,y_j)$ be distinct points of the triangular region $\Delta_{x,y}$, where
$
\delta_x = \frac{L}{N_x - 1}, \quad x_i = (i - 1)\delta_x, \quad y_j = (j - 1)\delta_x.
$
We derive backstepping kernel approximately
$$
k^M(x_i,y_j) = \frac{-\mu y_i}{2 \nu} \sum_{m=0}^M \left(-\frac{\mu}{4 \nu}\right)^m \frac{(x_i^2 - y_j^2)^m}{m! (m+1)!},
$$
by setting $M$ so that the error $\underset{1 \leq j \leq i \leq N_x}{\max}|k^{M+1}(x_i,y_j) - k^M(x_i,y_j)|$ is less than the value \emph{eps}, that represents the double-precision floating-point, around $10^{-16}$. Using $k^M$ and applying composite trapezoidal rule for the integration, discrete counterpart, $\mathbf{\Upsilon}_k^h$, of $\Upsilon_k$ can be expressed by an $N_x-$dimensional square matrix with elements
\begin{equation*}
	(\mathbf{\Upsilon}_k^h)_{i,j} =
	\begin{cases}
		0,  &\text{if }j > i, \\
		\frac{\delta_x}{2}k(x_i,x_j), &\text{if } j = 1\text{ or }j = i, \\
		\delta_x k(x_i,x_j), &\text{else}.
	\end{cases}
\end{equation*}
The discrete counterpart of the operator $P_N$, say $\mathbf{P}_N^h$, can also be expressed by an ${N_x}-$dimensional square matrix $\mathbf{P}_N^h = \delta_x \mathbf{W} \mathbf{W}^T$, where $\mathbf{W}$ is an $N_x \times N$ matrix with elements
$
(\mathbf{W})_{i,n} = \sqrt{\frac{2}{L}} \sin \left(\frac{ n\pi x_i}{L}\right), n = 1, \dotsc, N.
$

Next, we obtain an approximation to $\Phi_N u$ using the iteration 
\begin{equation} \label{invlem_vN1}
	\Phi_{N} u \equiv (I - \Phi_{N-1})[\Upsilon_k P_{N} u] -\frac{\left( (I-\Phi_{N-1})[\Upsilon_kP_N u],e_{N}\right)_2}{1+\left((I-\Phi_{N-1})[\Upsilon_k e_N],e_{N}\right)_2}(I-\Phi_{N-1})[\Upsilon_ke_N].
\end{equation}
Iteration \eqref{invlem_vN1} can be viewed as a mapping, say $\Psi$, from $p-$th level to $(p+1)-$th level, $p = 1, 2, \dotsc, N-1$, since derivation of $\Phi_N u$ via \eqref{invlem_vN1} requires information related with $\Phi_{N-1}$. More precisely one explicitly needs to know $\Phi_{N-1}[\Upsilon_k P_N u]$ and $\Phi_{N-1}[\Upsilon_ke_N]$ to derive $\Phi_N u$. Let us illustrate this situation schematically as follows:
\begin{equation} \label{scheme2}
	\left.
	\begin{aligned}
		&\Phi_{N-1} [\Upsilon_kP_N u] \\
		&\Phi_{N-1} [\Upsilon_ke_N]
	\end{aligned}
	\right\}
	\xlongrightarrow{\Psi} \Phi_N u.
\end{equation}
However, $\Phi_{N-1} [\Upsilon_kP_N u]$ and $\Phi_{N-1} [\Upsilon_ke_N]$ are unknowns as well, since they depend on $\Phi_{N-2}$ due to iteration \eqref{invlem_vN1}. In other words, considering \eqref{invlem_vN1} for $N -1$ and replacing $u$ with $\Upsilon_k P_N u$ or $\Upsilon_ke_N$, we see that we need information about $\Phi_{N-2}[(\Upsilon_k P_{N-1})(\Upsilon_k P_N) u]$ and $\Phi_{N-2}[\Upsilon_k e_{N-1}]$ or $\Phi_{N-2}[(\Upsilon_k P_{N-1})(\Upsilon_k e_N)]$ and $\Phi_{N-2}[\Upsilon_k e_{N-1}]$, respectively. Together with the iteration \eqref{scheme2}, let us illustrate this as the following scheme:
\begin{equation*} \label{scheme3}
	\left.
	\begin{aligned}
		\left.
		\begin{aligned}
			&\Phi_{N-2}[(\Upsilon_k P_{N-1})(\Upsilon_k P_N) u] \\
			&\Phi_{N-2}[\Upsilon_k e_{N-1}]
		\end{aligned}
		\right\} \xlongrightarrow{\Psi}
		&\Phi_{N-1} [\Upsilon_kP_N u] \\
		\left.
		\begin{aligned}
			&\Phi_{N-2}[(\Upsilon_k P_{N-1})(\Upsilon e_N)] \\
			&\Phi_{N-2}[\Upsilon_k e_{N-1}]
		\end{aligned}
		\right\} \xlongrightarrow{\Psi}
		&\Phi_{N-1}[\Upsilon_k e_N]
	\end{aligned}
	\right\} \xlongrightarrow{\Psi} \Phi_N u.
\end{equation*}
The functions lying at the most left end of the above scheme are unknown since they depend on $\Phi_{N-3}$. Iteratively, each backward step to determine $\Phi_j$ gives rise to a new unknown including $\Phi_{j-1}$, $j = 3, \dotsc , N$. Therefore, in order to understand what total information is required to derive $\Phi_N u$, we need to go backward step by step until we reach $\Phi_2$, since it depends on $\Phi_1$, which is already known as
$
\Phi_1 \varphi = \frac{1}{1 + \beta_1} \Upsilon_kP_1 \varphi,
$
where $\beta_1 = \int_0^L e_1(s) [\Upsilon_k e_1](s) ds$. For example, if one needs to find $\Phi_3 u$, then one can see by taking $N = 3$ on \eqref{scheme3} that $(\Upsilon_k P_1)[\Upsilon_ke_2]$, $(\Upsilon_k P_1)(\Upsilon_k P_2)[\Upsilon_k e_3]$ and $(\Upsilon_kP_1)(\Upsilon_kP_2)(\Upsilon_kP_3)u$ are required inputs. In general, to derive $\Phi_N u$ explicitly, we need
\begin{equation} \label{inviter_input1}
	(\Upsilon_k P_1)(\Upsilon_k P_2) \dotsm (\Upsilon_k P_{N-1})(\Upsilon_k P_{N})u
\end{equation}
and
\begin{align} 
	&(\Upsilon_k P_1)[\Upsilon_k e_2], \tag{6.3-2} \label{inviter_input2} \\
	&(\Upsilon_k P_1)(\Upsilon_k P_2)[\Upsilon_k e_3], \tag{6.3-3} \label{inviter_input3} \\
	& \quad\vdots \nonumber\\
	&(\Upsilon_k P_1)(\Upsilon_k P_2) \dotsc (\Upsilon_k P_{j-1})[\Upsilon_k e_j], \tag{6.3-j} \label{inviter_inputj} \\
	& \quad\vdots \nonumber\\
	&(\Upsilon_k P_1)(\Upsilon_k P_2) \dotsm (\Upsilon_k P_{N-1})[\Upsilon_k e_N], \tag{6.3-N} \label{inviter_inputN}
\end{align}
as an input. Algorithm \ref{alg:inviter} gives a numerical construction of this.
\begin{algorithm}[H]
	\begin{algorithmic}[1]
		\REQUIRE $\varphi$
		\IF{$N == 1$}
		\STATE $\mathbf{\Phi_N^h}u \gets \frac{1}{1 + \beta_1} \mathbf{\Upsilon_k^h} \mathbf{P_N^h}\varphi;$
		\ELSIF{$N > 1$}
		\STATE $K_\text{old}(1) \gets \text{expression } \eqref{inviter_input1};$
		\FOR{$j=2 \to N$}
		\STATE $K_\text{old}(j) \gets \text{expression } \eqref{inviter_inputj};$
		\ENDFOR
		\STATE $i \gets N;$
		\FOR{$p = 2 \to i$}
		\STATE $K_\text{new} \gets \mathbf{0}_{i-1}$
		\STATE $K_\text{new}(1) \gets \Psi(K_\text{old}(1),K_\text{old}(2));$
		\FOR{$j = 3 \to i$}
		\STATE $K_\text{new}(j-1) \gets \Psi\left(K_\text{old}(2),K_\text{old}(j)\right);$
		\ENDFOR
		\STATE \text{clear} $K_\text{old};$
		\STATE $K_\text{old} \gets K_\text{new};$
		\STATE $i \gets i - 1;$
		\ENDFOR
		\STATE $\mathbf{\Phi_N^h}\varphi \gets K_\text{old}(1);$
		\ENDIF
	\end{algorithmic}
	\caption{Numerical algorithm for derivation of $\mathbf{\Phi_N^h}\varphi$.}
	\label{alg:inviter}
\end{algorithm}

In view of the above discretization procedure, control input is of the form
\begin{equation} \label{num_ctrl}
	g(u(\cdot,t)) = \int_0^L k(L,y) \mathbf{P}_N^h [(\mathbf{I}_{N_x} - \mathbf{\Phi}_N^h) u(y,t)] dy,
\end{equation}
where $\mathbf{\Phi}_N^h$ is derived by using the above iterative scheme involving finite-dimensional operators $\mathbf{\Upsilon}_k^h$, $\mathbf{P}_N^h$ and $\mathbf{I}_{N_x}$ is the $N_x-$dimensional identity matrix. Note that we evaluate the integral \eqref{num_ctrl} by the composite trapezoidal rule.

Next, we derive discrete schemes for our linear and nonlinear continuous models. Define the finite-dimensional space $\mathrm{X}^{h} \doteq \{\mathbf{\varphi} \, | \, \mathbf{\varphi} = [\varphi_1 \, \dotsm \, \varphi_{N_x}]^T \in \mathbb{R}^{N_x}\}$ with the property that $\varphi_1 = 0$ and $\varphi_{N_x} = g(\varphi)$. Define the operator $\mathbf{A}: \mathrm{X}^h \to \mathrm{X}^h$,
$
\mathbf{A} \doteq -\nu  \mathbf{\Delta} - \alpha \mathbf{I}_{N_x} + \mu \mathbf{P}_N^h,
$
where $\mathbf{\Delta}$ is the $N_x-$dimensional tridiagonal matrix obtained by applying central difference approximation to the second-order derivative.

Let $N_t$ denotes the number of time steps, $T_\text{max}$ is the final time and $\delta_t = \frac{T_\text{max}}{N_t - 1}$. Let $\mathbf{u}^n = [u_1^n  \, \dotsm \, u_{N_x}^n]^T \in \mathrm{X}^h$ be an approximation of the solution $u$ at $n-$th time level. Using Crank-Nicolson time stepping, which yields unconditionally numerical stable results in time, we end up with the following fully discrete problem: For each $n = 0, 1, \dotsc, N_t$,
\begin{equation} \label{disc_nonlin}
	\begin{cases}
		\text{given } \mathbf{u}^n \in \mathrm{X}^h, \text{ find } \mathbf{u}^{n+1} \in \mathrm{X}^h \text{ such that} \\
		\left(\mathbf{I}_{N_x} + \frac{\delta_t}{2} \mathbf{A}\right)\mathbf{u}^{n+1} + \frac{\kappa\delta_t}{2} \left(\mathbf{u}^{n+1}\right)^3 = \mathbf{B}_l^n + \mathbf{B}_{nl}^n , \\
		u^{n+1}_{N_x} = g(\mathbf{u}^{n}),
	\end{cases}
\end{equation}
where
\begin{equation} \label{disc_nonlin_rhs}
	\mathbf{B}_l^n = \left(\mathbf{I}_{N_x} - \frac{\delta_t}{2} \mathbf{A}\right) \mathbf{u}^{n}, \quad\mathbf{B}_{nl}^n = -\frac{\kappa \delta_t}{2}\left(\mathbf{u}^{n}\right)^3.
\end{equation}
If $\kappa = 0$, i.e., the linearized version of the nonlinear model, \eqref{disc_nonlin}-\eqref{disc_nonlin_rhs} leads to linear system of equations in the unknown $\mathbf{u}^{n+1}$, hence can be solved. Regarding the case $\kappa = \mp 1$, \eqref{disc_nonlin}-\eqref{disc_nonlin_rhs} is nonlinear in $\mathbf{u}^{n+1}$. To solve \eqref{disc_nonlin}-\eqref{disc_nonlin_rhs} for $\mathbf{u}^{n+1}$, we first apply the following linearization scheme: Let $\mathbf{u}^{n,p}$ be an approximation to the unknown $\mathbf{u}^{n+1}$ and consider performing the iteration
\begin{equation} \label{iter_nonlin}
	\begin{cases}
		\mathbf{u}^{n,p+1} = \mathbf{u}^{n,p} + \mathbf{du}, \quad p = 0, 1, 2 \dotsc, \\
		\mathbf{u}^{n,0} = \mathbf{u}^n,
	\end{cases}
\end{equation}
together with
\begin{equation} \label{iter_nonlin2}
	u^{n,p+1}(N_x) = g(\mathbf{u}^{n,p}),
\end{equation}
to obtain a better approximation, $\mathbf{u}^{n,p+1}$, until $\underset{1 \leq i \leq N_x}{\max} |\mathbf{du}|$ is small enough. To determine $\mathbf{du}$ in each iteration, we replace $\mathbf{u}^{n+1}$ by $\mathbf{u}^{n,p} + \mathbf{du}$ for the linear term and approximate the nonlinear term $\left(\mathbf{u}^{n+1}\right)^3$ linearly in $\mathbf{du}$ by $\left(\mathbf{u}^{n,p}\right)^3 + 3 \mathbf{du} \left(\mathbf{u}^{n,p}\right)^2$. Then, we get
\begin{equation} \label{disc_nonlin2}
	\left(\mathbf{I}_{N_x} + \frac{\delta_t}{2} \mathbf{A}\right) (\mathbf{u}^{n,p} + \mathbf{du}) 
	\frac{\kappa \delta_t}{2} \left(\left(\mathbf{u}^{n,p}\right)^3 + 3 \mathbf{du} \left(\mathbf{u}^{n,p}\right)^2\right)	= \mathbf{B}_l^n + \mathbf{B}_{nl}^n.
\end{equation}
Now, we reexpress \eqref{disc_nonlin2} in terms of $\mathbf{du}$ and write
\begin{equation} \label{dw_lin}
	\left(\mathbf{I}_{N_x} + \frac{\delta_t}{2} \mathbf{A} + \frac{3\kappa\delta_t}{2} \text{diag} \left(\mathbf{u}^{n,p}\right)^2 \right)\mathbf{du} \\
	= \mathbf{B}_l^n + \mathbf{B}_{nl}^n 
	- \left(\mathbf{I}_{N_x} + \frac{\delta_t}{2} \mathbf{A}\right)\mathbf{u}^{n,p} - \frac{\kappa\delta_t}{2} \left(\mathbf{u}^{n,p}\right)^3.
\end{equation}
Here, $\text{diag} \left(\mathbf{u}^{n,p}\right)^2$ represents $N_x-$dimensional diagonal matrix, where the diagonal elements are $\left(u^{n,p}(x_i)\right)^2$, $i = 1, \dotsc, N_x$. Observe that \eqref{dw_lin} is linear in $\mathbf{du}$.  Consequently for given $\mathbf{u}^{n,p}$, \eqref{dw_lin} can be solved directly to obtain $\mathbf{du}$. Once $\underset{1 \leq i \leq N_x}{\max} |\mathbf{du}|$ is small enough, we stop the iteration \eqref{iter_nonlin}-\eqref{iter_nonlin2} and set $\mathbf{u}^{n+1} = \mathbf{u}^{n,p} + \mathbf{du}$. See Algorithm \ref{alg:numnonlin}.
\begin{algorithm}[H]
	\begin{algorithmic}[1]
		\REQUIRE  Initial state $u_0$.
		\STATE $\text{TOL} \gets \text{eps}$;
		\STATE $\mathbf{u}^1 \gets \mathbf{u_0}$;
		\FOR{$n=1 \to N_t - 1$}
		\STATE $u^{n+1}(1) \gets 0$;
		\STATE $p \gets 0$;
		\STATE $\mathbf{u}^{n,p} \gets \mathbf{u}^n$;
		\STATE $\mathbf{du} = \mathbf{1}_{N_x}$;
		\WHILE{$\text{max}|\mathbf{du}| > \text{TOL}$}
		\STATE \text{Solve \eqref{dw_lin} for }$\mathbf{du}$;
		\STATE $\mathbf{u}^{n,p+1} \gets \mathbf{u}^{n,p} + \mathbf{du}$;
		\STATE
		\begin{varwidth}[t]{\linewidth}
			$u^{n,p+1}(N_x) \gets$\par
			\hskip\algorithmicindent $\text{trapz}(y,k(L,y_j)\mathbf{P}_N^h[(\mathbf{I}_{N_x}-\mathbf{\Phi}_N^h) u^{n,p}(y_j)])$;
		\end{varwidth}
		\STATE $p \gets p + 1$;
		\ENDWHILE
		\STATE $\mathbf{u}^{n+1} = \mathbf{u}^{n,p+1}$;
		\ENDFOR
	\end{algorithmic}
	\caption{Numerical algorithm for nonlinear model.}
	\label{alg:numnonlin}
\end{algorithm}

The following numerical experiment is to verify our theoretical stabilization result for the nonlinear model. Results are obtained by taking $N_x = 1000$ spatial nodes and $N_t = 1000$ time steps.

Consider the nonlinear model
\begin{equation} \label{num2}
	\begin{cases}
		u_t - u_{xx} - 15 u + \kappa u^3 = 0, \quad x\in(0,1), t > 0, \\
		u(0,t) = 0, u(1,t) = g(t), \quad t > 0, \\
		u(x,0) = - \frac{1}{2}\sin(2\pi x) + \sin(3\pi x), \quad x \in (0,1).
	\end{cases}
\end{equation}
We take $\kappa=-1$. Therefore, if there is no control input acting on the model, then solution may grow unboundedly (see Fig. \ref{fig:3d_wocont_nonlin}).

\begin{figure}[!h]
	\centerline{\includegraphics[width=0.6\columnwidth]{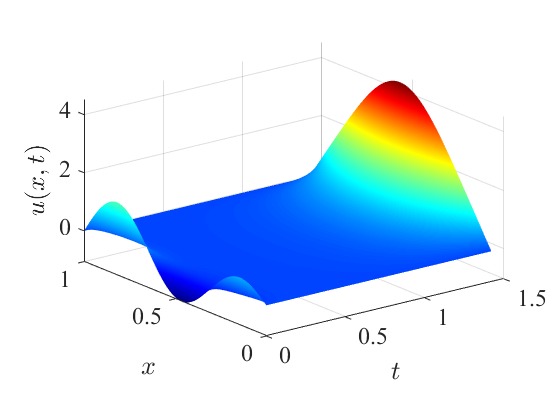}}
	\caption{Time evolution of the solution of uncontrolled nonlinear model \eqref{num2}.}
	\label{fig:3d_wocont_nonlin}
\end{figure}
Let us take $\mu = 15 > \alpha - \nu \lambda_1 = 15 - \pi^2$. Then, choosing $N = 2$ fulfills the condition
$$N = 2> \max \left\{\frac{\mu}{2\nu\lambda_1} - 1, \frac{\mu}{\mu + \nu\lambda_1 - \alpha} - 1\right\} = \max\left\{\frac{15}{2 \pi^2} -1,\frac{15}{\pi^2}- 1 \right\}.$$ With this choice of $\mu$ and $N$, we have $1 + \beta_1 = 1.746 \neq 0$ and $1+\left((I-\Phi_1)[\Upsilon_k e_{2}],e_{2}\right)_2 = 0.845 \neq 0$. Hence $(\mu,N) = (15,2)$ is an admissible decay rate-mode pair and we ensure that Lemma \ref{invlem} holds.

See Fig. \ref{fig:ctrl_nonlin} for the numerical simulation of the stabilized nonlinear model \eqref{num2}.

\begin{figure}[!h]
	\centering
	\begin{subfigure}[b]{0.49\textwidth}
		\centering
		\includegraphics[width=\textwidth]{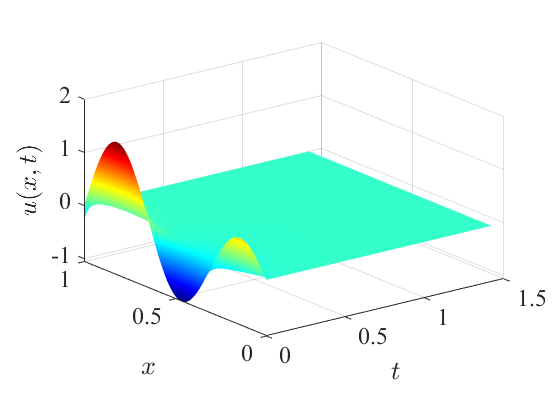}
		\caption{Time evolution of the \\ solution.}
		\label{fig:ctrl_nonlin_3d}
	\end{subfigure}
	\begin{subfigure}[b]{0.49\textwidth}
		\centering
		\includegraphics[width=\textwidth]{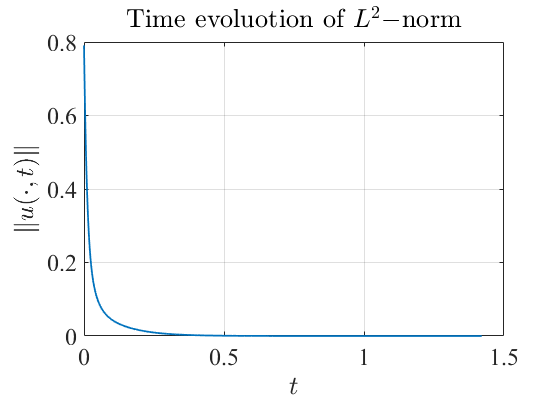}
		\caption{Time evolution of $L^2-$norm of the solution.}
		\label{fig:ctrl_nonlin_l2}
	\end{subfigure}
	\caption{Simulations for the stabilized nonlinear model \eqref{num2}.}
	\label{fig:ctrl_nonlin}
\end{figure}

\subsection*{A few words on potential applications}
Our proposed algorithm suggests an efficient and fast solution for stabilizing heat and more generally a diffusion process through the boundary of a physical medium, due to its rapid stabilization feature that uses only finitely many Fourier modes. The feedback stabilization technique introduced in this work has potential applications across various engineering fields. A classical example is the thermal management in engineering systems.  For example, maintaining stable temperatures in electronic systems, such as chips and processors, and in heat-sensitive sensors used in robotics and automation, is critical to prevent overheating \cite{Moore}. In particular, the nonlinear model \eqref{num2} can be considered as a one-dimensional abstraction (simplification) of thermal changes in a processor with, say constant heat capacity and thermal conductivity, and a nonlinear heat generation (e.g., when $\kappa = -1$) due to power dissipation.  It is observed from the numerical simulation that one can achieve thermal management for such a system by using a boundary feedback stabilizer that uses only finitely many information associated with the state.

We predict our technique can also find use in other fields of engineering. For example, in chemical engineering, controlling reactor stability is essential, e.g., see \cite{Dejan} and the references therein. Often, access to the interior of the medium is limited in chemical reactors. In such cases, e.g., for tubular reactors, an effective control of heat and concentration profiles from the boundary is invaluable.  Similarly, in material production processes like solidification, managing heat at the boundary helps stabilize phase changes \cite{Koga2020}. Another practical application of the proposed method is perhaps the management of heat and ventilation control of road tunnels in civil engineering \cite{Wang2024}.  Stabilization of temperature and air quality in these structures can be achieved by managing heat and pollutants at the boundaries. These diverse examples highlight potential versatility of the suggested algorithm.

\section*{Acknowledgment}
This research was funded by TÜBİTAK 1001 Grant \#122F084. T. Özsarı's research was partially supported by Science Academy's Young Scientist Award (BAGEP).

\appendix

\section{Local well-posedness for the nonlinear target model}
\label{app-exist}
\subsection{Linear non-homogeneous estimates} \label{apriori}
Consider the linear, non-homogeneous problem on $(0,L)\times(0,T)$:
\begin{equation} \label{pde_apriori}
	\begin{cases}
		q_t - \nu q_{xx} - \alpha q + \mu P_Nq = f(x,t), \\
		q(0,t) = q(L,t) = 0,\\
		q(x,0) = q_0(x).
	\end{cases}
\end{equation}
We first prove a linear nonhomogeneous estimate in $X_T^0$ assuming $q_0\in L^2(0,L)$. Taking $L^2-$inner product of the main equation by $2q$ and integrating by parts, we get
\begin{equation} \label{apriori_l2}
	\frac{d}{dt}\|q(t)\|^2 + 2\nu \|q_x(t)\|^2 - 2\alpha \|q(t)\|^2  
	+2\mu \int_0^L q(t) P_Nq(t) dx \leq 2\int_0^L |f(t)||q(t)| dx.
\end{equation}
Integrating above inequality over $[0,t]$ and using Cauchy-Schwarz inequality in $x$, we get 
\begin{equation*}
	\begin{split}
	\min\{1,2\nu\}\cdot\left(\|q(t)\|^2 + \int_0^t\|q_x(s)\|^2ds\right) \le& 2\alpha\int_0^t \|q(s)\|^2ds+2\mu \int_{0}^{t}\|q(s)\|\|P_Nq(s)\|ds\\ 
	&+\|q_0\|^2+2\int_{0}^{t}\|q(s)\|\|f(s)\|ds\\
	\le& \|q_0\|^2+2(\alpha+\mu)\int_0^T \|q(s)\|^2ds\\
	&+2\left(\esssup_{t\in[0,T]}\|q(t)\|\right)\cdot\|f\|_{L^1(0,T;L^2(0,T))}.
	\end{split}
\end{equation*}
It follows that for $t\in [0,T]$ a.e., in view of Cauchy's inequality, for $\epsilon>0$, we get
\begin{multline*}
	\|q(t)\|^2 + \int_0^t\|q_x(s)\|^2ds
	\le C_1\|q_0\|^2+C_2T\esssup_{t\in[0,T]}\|q(t)\|^2
	\\ +{\epsilon}\left(\esssup_{t\in[0,T]}\|q(t)\|^2\right)+C_\epsilon\|f\|_{L^1(0,T;L^2(0,T))}^2,
\end{multline*}
where the constants at the right hand side are given by $C_1=\frac{1}{\min\{1,2\nu\}}$, $C_2=\frac{2(\alpha+\mu)}{\min\{1,2\nu\}}$, and $C_\epsilon=\frac{1}{\epsilon\cdot \min\{1,2\nu\}^2}$.

Now, taking $\esssup$ with respect to $t$ on $[0,T]$, we get
\begin{equation}\label{apest01}
	\|q\|_{X_T^0}\le c_T(\|q_0\|+\|f\|_{L^1(0,T;L^2(0,L))}),
\end{equation} where $c_T=\frac{\max\{C_1,C_\epsilon\}}{1-\epsilon-C_2T}>0$ with $\epsilon$ and $T$ small enough that $\epsilon+C_2T<1$.
Next, we prove a linear non-homogeneous estimate in $X_T^1$ assuming $q_0\in H_0^1(0,L)$. Taking the $L^2-$inner product of the main equation by $-2q_{xx}$,
\begin{multline*} \label{h1aoriori}
	\frac{d}{dt}\|q_x(t)\|^2 + 2\nu \|q_{xx}(t)\|^2 - 2\alpha \|q_x(t)\|^2 \\ 
	- 2\mu \int_0^L q_{xx}(x,t) P_Nq(x,t) dx  \leq 2\int_0^L |f(x,t)||q_{xx}(x,t)| dx.
\end{multline*}
Using arguments similar to those used for \eqref{apest01}, we obtain
\begin{equation}\label{XT1est}
	\|q\|_{X_T^1}\le c_T(\|q_0\|_{H_0^1(0,L)}+\|f\|_{L^1(0,T;L^2(0,L))}).
\end{equation}

\subsection{Contraction} We assume $u_0\in H^{\ell}(0,L)$ where either $\ell=0$ or $\ell=1$. Therefore, by bounded invertibilility of the backstepping transformation, we have  $w_0\in H^{\ell}(0,L)$ for the same value of $\ell$. We set the solution map
\begin{eqnarray} \label{wp_map2}
	&w = \Psi z(t) \doteq S(t)w_0 + \kappa \int_0^t S(t - \tau) Fz(\tau) d\tau,
\end{eqnarray}
where $S(t)w_0$ is the solution of the corresponding linear problem \eqref{pde_tl}. Using estimates of Appendix \ref{apriori} and boundedness of $T_N^{-1}$ on $L^2(0,L)$, we obtain
\begin{equation*}\label{nonlin_loc1}
	\begin{split}
		\|\Psi z\|_{X_T^\ell} &\leq \left\| S(t)w_0 + \kappa \int_0^t S(t - \tau) Fz(\tau) d\tau\right\|_{X_T^\ell} \\
		&\leq c_T\|w_0\|_{H^{\ell}(0,L)} + c_{T,k}\int_0^T \left\|(T_Nz)^3(t) \right\| dt.
	\end{split}
\end{equation*}
Applying the Gagliardo-Nirenberg inequality and using boundedness of the backstepping transformation, we get
\begin{equation} \label{nonlin_loc1.5}
	\begin{split}
		\left\|\left(T_Nz\right)^3(t)\right\| &= \|T_Nz\|_{L^6(0,L)}^3 \\
		&\leq c_L\|\partial_x((T_Nz)(t)\| \|T_Nz(t)\|^2\\
		&\leq c_{k,L} \|z_x(t)\| \|z(t)\|^2,
	\end{split}
\end{equation}
from which it follows that
\begin{equation} \label{nonlin_loc2}
	\begin{split}
	\int_0^T \left\|((I + \Upsilon_kP_N)z)^3(t) \right\| dt &\leq \int_0^T \left(c_{k,L}\|z_x(t)\| \|z(t)\|^2 \right) dt \\
	&\leq c_{k,L} \sqrt{T}\|z\|_{L^\infty(0,T;L^2(0,L))}^2 \|z_x\|_{L^2(0,T;L^2(0,L))} \\
	&\leq c_{k,L}\sqrt{T}\|z\|_{X_T^\ell}^3.
	\end{split}
\end{equation}
Combining \eqref{nonlin_loc1.5}-\eqref{nonlin_loc2}, we get $$\label{fixed1}\|\Psi z\|_{X_T^\ell} \leq c_T\left(\|w_0\|_{H^{\ell}(0,L)} + \sqrt{T}\|z\|_{X_T^\ell}^3\right),$$ where $c_T$ is uniformly bounded in $T$, and it also depends on parameters $\nu,\alpha,\mu, k, L$, which are fixed throughout.

To prove local existence, it is enough to show that the mapping \eqref{wp_map2} has a fixed point for some $T > 0$. Let $B_{R}^T \doteq \left\{ z \in X_T^\ell \, | \, \|z\|_{X_T^\ell} \leq R \right\}$, where $R \doteq 2c_{T} \|w_0\|_{H^{\ell}(0,L)}$. Then,
$\|\Psi z\|_{X_T^\ell} \leq \frac{R}{2} + c_{T}\sqrt{T} R^3.$ Let us choose $T > 0$ small enough that $2 c_{T}\sqrt{T}R^2 < 1$ holds. This yields $\|\Psi z\|_{X_T^\ell} < R$, i.e., $\Psi$ maps $B_{R}^T$ onto itself for sufficiently small $T > 0$.

Next, we show that $\Psi$ is a contraction on $B_R^T$ for sufficiently small $T$. Let $z_1, z_2 \in B_R^T$. Then
\begin{equation*} \label{cont1}
	\begin{split}
	\left\|\Psi z_1 - \Psi z_2\right\|_{X_T^\ell} &= \left\| \int_0^t S(\cdot - s) \left(Fz_1 - Fz_2\right) ds \right\|_{X_T^\ell} \\
	&\leq c_{T}\int_0^T \left\|Fz_1 - Fz_2\right\| dt \\
	&= c_{T} \int_0^T \left\| T_N^{-1} \left[\left(T_Nz_1\right)^3 \right.\right. \left.\left.- \left(T_Nz_2\right)^3 \right] \right\| dt \\
	&\leq c_{T} \int_0^T \left\|\left(T_Nz_1\right)^3 - \left(T_Nz_2\right)^3 \right\| dt \\
	&\leq c_{T} \int_0^T \left\|\left(T_N(z_1 - z_2)\right) \left(\left(T_Nz_1\right)^2 \right.\right.
	\left.\left.+ \left(T_Nz_2\right)^2 \right) \right\| dt \\
	&\leq c_{T}\int_0^T \left(\|T_Bz_1\|_{\infty}^2 \right.\left.+\|T_Nz_2\|_{\infty}^2\right)\|T_N(z_1 - z_2)\|dt\\
	&\leq c_{T}\int_0^T \left(\|\partial_x z_1\|\|z_1\| +\|\partial_x z_2\|\|z_2\|\right)\|z_1 - z_2\|dt\\
	&\leq c_{T}\sqrt{T}(\|z_1\|_{X_T^\ell}^2+\|z_2\|_{X_T^\ell}^2)\|z_1 - z_2\|_{X_T^\ell},
	\end{split}
\end{equation*}
where we used the boundedness of the backstepping transformation as well as its inverse.   For sufficiently small $T > 0$, we guarantee that $c_{T} T^\frac{1}{2} R^2 < \frac{1}{2}$. Hence $\Psi$ becomes a contraction on a closed ball $B_R^T$. This yields a unique solution in $B_R^T\subset X_T^\ell$.

\bibliographystyle{amsplain}
\bibliography{ref}
\end{document}